\theoremstyle{thmstyleone}%
\newtheorem{theorem}{Theorem}
\theoremstyle{thmstyletwo}%
\newtheorem{example}{Example}%
\newtheorem{remark}{Remark}%
\newtheorem{lemma}{Lemma}%
\theoremstyle{thmstylethree}%
\begin{document}

\title[\textbf{Fokker-Planck equation for a stochastic heat equation with $Q$-Wiener noise and non-homogeneous boundary conditions}]{\textbf{Fokker--Planck equation for stochastic heat equations}}

\author[1,2]{\fnm{Qingyan} \sur{ Meng}}\email{mengqy2024@gbu.edu.cn}

\author[1]{\fnm{Jinqiao} \sur{ Duan}}\email{duan@gbu.edu.cn}

\author*[3]{\fnm{Jinlong} \sur{ Wei}}\email{weijinlong.hust@gmail.com}

\author[4]{\fnm{Peter E.} \sur{ Kloeden}}\email{kloeden@math.uni-frankfurt.de}

\affil[1]{\orgdiv{Department of Mathematics \& Guangdong Provincial Key Laboratory of Mathematical and
Neural Dynamical Systems}, \orgname{Great Bay University}, \orgaddress{\city{Dongguan}, \postcode{523000},  \country{China}}}

\affil[2]{\orgdiv{School of Mathematical Sciences}, \orgname{University of Science and Technology of China}, \orgaddress{\city{Hefei}, \postcode{230026}, \country{China}}}

\affil[3]{\orgdiv{School of Statistics and Mathematics}, \orgname{Zhongnan University of Economics and Law}, \orgaddress{\city{Wuhan}, \postcode{430073}, \country{China}}}

\affil[4]{\orgdiv{Mathematisches Institut}, \orgname{Universit\"{a}t T\"{u}bingen}, \orgaddress{\city{T\"{u}bingen}, \postcode{72076}, \country{Germany}}}

\abstract{This work is devoted to the study of the Fokker--Planck equation for a stochastic heat equation with an additive $Q$-Wiener noise and non-homogeneous boundary conditions. We explicitly construct the probability density function and establish the associated Fokker--Planck equation by applying the eigenfunction expansion technique. Moreover, the Feynman--Kac formula is used to obtain the probabilistic representation of the solution. The analysis is further extended to cases with multiplicative noise involving nonlocal diffusion operators under homogeneous boundary conditions, as well as the corresponding Kardar--Parisi--Zhang (KPZ) equation. Notably, the evolution of the probability density function for the stochastic heat equation depends critically on the spatial location.}

\keywords{Fokker--Planck equation, Probability density function, Stochastic heat equation, $Q$-Wiener noise, Non-homogeneous boundary}



\maketitle

\section{Introduction}\label{sec1}

Partial differential equations serve as a fundamental tool for depicting a multitude of physical phenomena, including continuous medium mechanics, electromagnetism, and heat conduction. In practical scenarios, however, numerous systems are susceptible to the influence of random factors, including the inherent uncertainty in quantum regimes, turbulence within fluids, and fluctuations in financial markets. Consequently, stochastic partial differential equations (SPDEs) are employed to address these complexities \cite{DaPrato,DW,Meerson,Zhang}.

Stochastic heat conduction equation offers a more accurate depiction of heat transfer processes affected by microscopic inhomogeneities in materials, thereby enriching the theoretical framework of heat conduction \cite{BCL,BG,BGHZ,CK,HNS}. In \cite{C}, R. Chiba reviewed the stochastic analysis of heat conduction and thermal stress in solids from six aspects: random surface/ambient temperature, random initial temperature, random material properties, random heat transfer coefficient, random internal heat generation/absorption, and random geometry. In \cite{H}, Y. Hu presented some recent advances in the stochastic heat equation subject to Gaussian noises with diverse variance structures, including the existence and uniqueness of the solution, the probabilistic representation of the integer-order moments and $p$-th moment boundedness ($p\geq2$) of the solution, as well as the exact asymptotic behavior of the solution in the sense of moments. And the stochastic heat equation with multiplicative noise is related to parabolic Anderson model, the Kardar--Parisi--Zhang (KPZ) equation or polymers (see \cite{H,KPZ,Q} and references therein).

The probability density function (PDF) of the solution process for an SPDE, along with the associated Fokker-Planck equation it satisfies, has garnered widespread attention due to their capability to reflect the statistical characteristics of the underlying stochastic process \cite{DaPrato,Agram,BV}. However, in contrast to the abundant research achievements in the field of stochastic differential equations (SDEs) \cite{Arnold,Bogachev,Henry,Meng,Pirner,Runfola}, the explicit PDF and Fokker--Planck equation for stochastic heat equation (SHE)  with an additive $Q$-Wiener noise under non-homogeneous boundary conditions have not yet been derived based on the literature currently at our disposal.

In this paper, we focus on the following SHE with an additive $Q$-Wiener noise and non-homogeneous boundary conditions:
\begin{eqnarray}\label{1.1}
\left\{
\begin{array}{ll}
\frac{\partial}{\partial t}U(t,x)=a^2\frac{\partial^2}{\partial x^2}{U(t,x)}+b_{}U(t,x)+f(t,x)+\sigma\frac{\partial}{\partial t}{W}(t,x),~&t>0,~x\in(0,1),\\ [0.2cm]
\frac{\partial}{\partial x}U(t,0)=h(t),~~U( t,1)=g(t),~&t\geq0,\\ [0.2cm]
U(0,x)=U_0(x),~&x\in(0,1),
\end{array}
\right.
\end{eqnarray}
where $a,b$ and $\sigma$ are constants, $f:[0,\infty)\times[0,1]\rightarrow \mathbb{R}$, $h,g:[0,\infty)\rightarrow \mathbb{R}$ are Borel measurable, and $W(t,x)$ is a $Q$-Wiener process. For more details on $Q$-Wiener process, we refer to Section 2.

 Furthermore, we consider the following SHE involving anomalous diffusion term with a multiplicative $Q$-Wiener noise and sinusoidal initial excitation:
\begin{align}\label{1.2}
\left\{
\begin{array}{ll}
\frac{\partial}{\partial t}{U}(t,x)=a^2\frac{\partial^2}{\partial x^2}U(t,x)-b(-\frac{\partial^2}{\partial x^2})^{\frac{\alpha}{2}}U(t,x)+cU(t,x)\\ [0.2cm]
\qquad\qquad\quad+\varepsilon U(t,x)\frac{\partial}{\partial t}{W}(t,x), &t>0,~x\in(0,1),\\ [0.2cm]
U( t,0)=U( t,1)=0,&t\geq0,\\ [0.2cm]
U( 0,x)=U_m(0)\sqrt{2}\sin(m\pi x),&x\in(0,1),
\end{array}
\right.
\end{align}
where $\alpha\in(0,2)$, $c$ is a constant, $\varepsilon$ is a positive real number, $U_m(0)>0$ is a random variable that is independent of the $Q$-Wiener process ${W}(t,x)$, and $m$ is a positive integer. Note that the presence of the non-local term in (\ref{1.2}) restricts our investigation to cases with homogeneous boundary conditions, while the emergence of multiplicative noise confines our consideration to scenarios where the initial condition only involves the $m$-th eigenfunction.

In addition, introduce the Cole--Hopf transformation $K(t,x)=2\theta\ln |U(t,x)|/\xi $, we investigate the following KPZ equation, which is related to the SHE (\ref{1.2}) with $a^2=\theta,b=c=0$ and $x\in(\underline{x},\overline{x})\subset(k/m,(k+1)/m)$, $k=0,1,2,\ldots,m-1$,
\begin{align}\label{1.3}
\left\{
\begin{array}{ll}
\frac{\partial}{\partial t}{K}(t,x)=\theta\frac{\partial^2}{\partial x^2}K(t,x)+\frac{\xi}{2}|\frac{\partial}{\partial x}K(t,x)|^2+\frac{2\theta\varepsilon}{\xi} \frac{\partial}{\partial t}{W}(t,x),&t>0,~x\in(\underline{x},\overline{x}),\\ [0.2cm]
K( t,\underline{x})=\frac{2\theta}{\xi}\ln |U( t,\underline{x})|,~~K( t,\overline{x})=\frac{2\theta}{\xi}\ln |U( t,\overline{x})|,&t\geq0,\\ [0.2cm]
K( 0,x)=\frac{2\theta}{\xi}\ln [U_m(0)\sqrt{2}|\sin(m\pi x)|],&x\in(\underline{x},\overline{x}).
\end{array}
\right.
\end{align}

This work consists of four main components:

(i) The first part centers on deriving the explicit PDF of the SHE (\ref{1.1}) with an additive $Q$-Wiener noise and non-homogeneous boundary conditions. The derivations employ the homogenization technique, the method of eigenfunction expansion and address the challenges posed by the space-dependence of the SPDE (\ref{1.1}) by using the fact that if each dimension of the random variables obeys a normal distribution and the sums of their means and variances are finite, then their sum (i.e., the solution of the stochastic system (\ref{1.1})) still obeys a normal distribution, with the mean and variance being the sums of the respective means and variances of the variables.

(ii) The second part is to derive the Fokker--Planck equation and the probability representation solution of the density function for the SHE (\ref{1.1}). It is worthy to mention that the probability representation solution of Fokker--Planck equation with time-dependence coefficients can be constructed by using the time inverse method and Feynman--Kac formula.

(iii) The third part focuses on the case of multiplication noise. It should be pointed out that the logarithm of the random variable $ U_m( 0)$ has a distribution possessing finite first and second moments, which includes but is not limited to the normal, uniform and exponential distributions. Additionally, if the random variable $ U_m( 0)$ is a constant, $\log U_m( 0)$ obeys a Dirac delta distribution. The appearance of multiplicative noise restricts our consideration to results under the initial condition corresponding to the $m$-th eigenfunction. Because in this situation, the random variables in each dimension obey a log-normal distribution, which prevents us from obtaining the distribution of their sum.

(iv) The fourth part is devoted to studying the differences in the PDFs of the solution processes for SDEs and SPDEs. Compared to the PDF of an SDE, the evolution of the PDF for an SPDE is more complex, i.e., the evolution of the PDF for the solution $U(t,x)$ of the SHE (\ref{1.2}) depends critically on the choice of the spatial location $x$. Specifically, when $x\in\{0,1/m,\ldots,(m-1)/m,1\}$, the random variable $U(t,x)$ obeys a Dirac delta distribution for every $t$, and when $x\in(0,1)\backslash\{0,1/m,\ldots,(m-1)/m,1\}$, the random variable $U(t,x)$ obeys a log-normal distribution with spatiotemporally dependent mean and purely time-dependent variance. Moreover, as $x$ approaches $k/m$, $k=1,2,\ldots,m-1$, the distribution of $U(t,x)$ transitions from the log-normal to the Dirac delta distribution that is independent of time.

The paper is organized as follows. In Section 2, we derive the explicit PDF, the corresponding Fokker--Planck equation, and probability representation solution for the SHE (\ref{1.1}). In Section 3, we establish analogous results for the SHE (\ref{1.2}) and analyze the PDF at different spatial locations. In Section 4, we investigate the KPZ equation (\ref{1.3}) related to the SHE (\ref{1.2}). Additionally, we provide some numerical examples to illustrate our theoretical results.

\section{Stochastic heat equation with an additive $Q$-Wiener noise under non-homogeneous boundary conditions}\label{sec2}

In this section, we discuss the PDF and the Fokker--Planck equation of (\ref{1.1}) with an additive $Q$-Wiener process $W(t,x)=\sum_{n=1}^\infty q_nW_n(t)e_n(x)$, where $q_n\geq0$ and $\{W_n(t)\}_{n\geq1}$ are standard scalar independent Brownian motions. For $Q$-Wiener process $W$, the trace of the covariance operator $Q$ satisfies $\mathrm{Tr}(Q)=\sum_{n=1}^\infty q_n^2<\infty$. For more details on $Q$-Wiener process, please refer to \cite[Section 3.5]{DW}.

We first make the following assumption.

\smallskip
($\mathbf{A0}$) The functions $g$ and $h$ in (\ref{1.1}) belong to  $C^1([0,\infty))$, and ${f}$ belongs  to $ C^1([0,\infty);L^1([0,1]))$.

\smallskip
Let $Y(t,x)=h(t)[x-1]+g(t)$ and $V(t,x)=U(t,x)-Y(t,x)$. If $U(t,x)$ satisfies (\ref{1.1}), then $V(t,x)$ satisfies the following SHE with homogeneous boundary conditions
\begin{align}\label{2.1}
  \left\{
\begin{array}{ll}
\frac{\partial}{\partial t}{V}(t,x)=a^2\frac{\partial^2}{\partial x^2}V(t,x)+bV(t,x)+\tilde{f}(t,x)+\sigma\frac{\partial}{\partial t}{W}(t,x),&x\in(0,1),\\ [0.2cm]
\frac{\partial}{\partial x}V( t,0)=V( t,1)=0,&t\geq0,\\ [0.2cm]
V( 0,x)=U_0(x)-Y(0,x)=:V_0(x),&x\in(0,1),
\end{array}
\right.
\end{align}
where $\tilde{f}(t,x)=f(t,x)-\frac{\partial}{\partial t}Y(t,x)+bY(t,x)$. Conversely, suppose $V(t,x)$ satisfies (\ref{2.1}), then $U(t,x)=V(t,x)+Y(t,x)$ satisfies (\ref{1.1}) as well. Therefore, SHEs (\ref{1.1}) and (\ref{2.1}) are equivalent.

For every $n$, set $e_n(x)=\sqrt{2}\cos(\beta_n x)$ with $\beta_n=\left(n-1/2\right)\pi$ and denote
\begin{align}\label{2.2}
\lambda_n=b-(a\beta_n)^2,~\tilde{f}_n(t)=\langle \tilde{f}(t,\cdot),e_n(\cdot)\rangle~ \mathrm{and} ~Y_n(t,x
)=\langle Y(t,\cdot),e_n(\cdot)\rangle e_n(x).
\end{align}

\begin{lemma}\label{le.2.1}
Let $V$ be the solution of $(\ref{2.1})$ and $V_n(t)=\langle V( t,\cdot),e_n(\cdot)\rangle$. Then
\begin{align}\label{2.3}
 {V}_n(t)=V_n(0)\exp(\lambda_n t)+\int_0^t\exp\{\lambda_n (t-s)\}\tilde{f}_n(s)\mathrm{d}s+\sigma q_n\int_0^t\exp\{\lambda_n (t-s)\}\mathrm{d}{W}_n(s)
\end{align}
and $V(t,x)=\sum_{n=1}^\infty V_n(t)e_n(x)$.
\end{lemma}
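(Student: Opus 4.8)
The plan is to project the SPDE $(\ref{2.1})$ onto the orthonormal eigenbasis $\{e_n\}$, thereby decoupling it into a countable family of scalar linear SDEs, and then to solve each one explicitly by variation of constants. First I would record the spectral properties of the basis: writing $\beta_n=(n-1/2)\pi$, the functions $e_n(x)=\sqrt{2}\cos(\beta_n x)$ satisfy $e_n'(0)=0$, $e_n(1)=\sqrt{2}\cos(\beta_n)=0$, and $-e_n''=\beta_n^2 e_n$, so they are exactly the eigenfunctions of $-\frac{d^2}{dx^2}$ on $(0,1)$ under the mixed boundary conditions $u'(0)=u(1)=0$. A direct computation using $\sin(2\beta_n)=0$ gives $\langle e_n,e_m\rangle=\delta_{nm}$, and Sturm--Liouville theory guarantees that $\{e_n\}$ is complete in $L^2(0,1)$. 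Completeness yields the second assertion, $V(t,x)=\sum_{n=1}^\infty V_n(t)e_n(x)$ with $V_n(t)=\langle V(t,\cdot),e_n\rangle$.

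To obtain $(\ref{2.3})$ I would pair $(\ref{2.1})$ with $e_n$ in $L^2(0,1)$. The only delicate term is the Laplacian: integrating by parts twice, the boundary contributions vanish because the Neumann condition $\partial_x V(t,0)=0$ together with $e_n(1)=0$ kills the first boundary term, while the Dirichlet condition $V(t,1)=0$ together with $e_n'(0)=0$ kills the second, leaving $\langle a^2\partial_{xx}V,e_n\rangle=a^2\langle V,e_n''\rangle=-a^2\beta_n^2 V_n(t)$. Combining this with $\langle bV,e_n\rangle=bV_n(t)$, $\langle\tilde{f}(t,\cdot),e_n\rangle=\tilde{f}_n(t)$, and $\langle W(t,\cdot),e_n\rangle=q_n W_n(t)$ (from orthonormality applied to $W=\sum_k q_k W_k e_k$) reduces the equation to the scalar Ornstein--Uhlenbeck-type SDE
\begin{equation*}
\mathrm{d}V_n(t)=\bigl(\lambda_n V_n(t)+\tilde{f}_n(t)\bigr)\,\mathrm{d}t+\sigma q_n\,\mathrm{d}W_n(t),\qquad \lambda_n=b-(a\beta_n)^2.
\end{equation*}
I would then solve it by the integrating factor $e^{-\lambda_n t}$: applying It\^{o}'s formula to $e^{-\lambda_n t}V_n(t)$ removes the linear drift, so that $\mathrm{d}(e^{-\lambda_n t}V_n(t))=e^{-\lambda_n t}\tilde{f}_n(t)\,\mathrm{d}t+\sigma q_n e^{-\lambda_n t}\,\mathrm{d}W_n(t)$; integrating over $[0,t]$ and multiplying back by $e^{\lambda_n t}$ gives precisely $(\ref{2.3})$. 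The stochastic term is a well-defined It\^{o} integral of the deterministic, square-integrable integrand $s\mapsto e^{\lambda_n(t-s)}$.

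The main obstacle is the rigorous justification of the projection when $V$ is only a mild (or weak) solution, for which the pathwise regularity needed to integrate by parts twice is not available. The cleanest remedy is to phrase everything through the semigroup generated by $A=a^2\frac{d^2}{dx^2}+b$ with domain encoding $u'(0)=u(1)=0$: this operator is self-adjoint with $Ae_n=\lambda_n e_n$, so its analytic semigroup satisfies $e^{tA}e_n=e^{\lambda_n t}e_n$. Pairing the mild-solution representation $V(t)=e^{tA}V_0+\int_0^t e^{(t-s)A}\tilde{f}(s)\,\mathrm{d}s+\sigma\int_0^t e^{(t-s)A}\,\mathrm{d}W(s)$ with $e_n$ and exploiting the self-adjointness of $e^{(t-s)A}$ reproduces $(\ref{2.3})$ with no differentiability assumption on $V$. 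The remaining point, interchanging $\langle\cdot,e_n\rangle$ with the stochastic convolution, is handled by the stochastic Fubini theorem.
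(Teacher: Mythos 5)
Your proposal is correct and follows essentially the same route as the paper: project equation (\ref{2.1}) onto the orthonormal eigenbasis $e_n(x)=\sqrt{2}\cos(\beta_n x)$ to obtain the scalar SDE $\mathrm{d}V_n(t)=[\lambda_n V_n(t)+\tilde{f}_n(t)]\mathrm{d}t+\sigma q_n\mathrm{d}W_n(t)$ and solve it by variation of constants. The paper's proof is considerably terser (it simply multiplies by $e_n$ and integrates, without spelling out the boundary-term cancellation, the integrating factor, or the mild-solution/stochastic-Fubini justification you supply), so your additional detail only strengthens the same argument.
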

\begin{proof} Using the eigenfunction expansion with eigenfunctions $e_n(x)=\sqrt{2}\cos(\beta_n x)$, $n=1,2,\ldots$, we have $V(t,x)=\sum_{n=1}^\infty V_n(t)e_n(x)$. If one multiplies both sides of the first identity in (\ref{2.1}) by $e_n$ and subsequently integrates the resulting equation over the spatial domain $[0,1]$, it leads to
\begin{align*}
 \mathrm{d}{V}_n(t)=\big[\lambda_nV_n(t)+\tilde{f}_n(t)\big]\mathrm{d}t+\sigma q_n\mathrm{d}{W}_n(t),~~n=1,2,\ldots,
\end{align*}
where  $\lambda_n$, $\tilde{f}_n(t)$ are given in (\ref{2.2}) and the initial value $V_n(0)=\langle V(0,\cdot),e_n(\cdot)\rangle$. Clearly, its solution is (\ref{2.3}).
\end{proof}

Let $V_n(t,x)=V_n(t)e_n(x)$ and $\tilde{f}_n(t,x)=\tilde{f}_n(t) e_n(x)$. By (\ref{2.3}), it follows that
\begin{align}\label{2.4}
 \nonumber{V}_n(t,x)=&V_n(0,x)\exp(\lambda_n t)+\int_0^t\exp\{\lambda_n (t-s)\}\tilde{f}_n(s,x)\mathrm{d}s\\
 &+\sigma q_ne_n(x)\int_0^t\exp\{\lambda_n (t-s)\}\mathrm{d}{W}_n(s).
\end{align}
Denote $U_n(t,x)=\langle U( t,\cdot),e_n(\cdot)\rangle e_n(x)$. In view of Lemma \ref{le.2.1} and (\ref{2.4}), we have
\begin{align}\label{2.5}
U_n(t,x)=V_n(t,x)+Y_n(t,x) \quad
{\rm and} \quad
 U( t,x)=\sum_{n=1}^{\infty}U_n(t,x).
\end{align}
Without loss of generality, we assume that

\smallskip
($\mathbf{A1}$) The initial value of the stochastic process $\{{U}_n(t):=\langle U(t,\cdot),e_n(\cdot)\rangle\}_{t\geq 0}$ obeys the normal distribution $N({\mu}_n(0),\nu_n(0))$, where the constants ${\mu}_n(0)\in\mathbb{R}$ and $\nu_n(0)\in(0,\infty)$. Moreover,
\begin{align}\label{2.6}
\sum_{n=1}^\infty|\mu_n(0)|<\infty \quad {\rm and} \quad \sum_{n=1}^\infty|\nu_n(0)|<\infty.
\end{align}

It follows from (\ref{2.4}), (\ref{2.5}) and the assumption ($\mathbf{A1}$) that the mean
\begin{align}\label{2.7}
\nonumber\mathbb{E}{U}_n(t,x)&=\mathbb{E}[{V}_n(0,x)]\exp(\lambda_n t)+\int_0^t\exp\{\lambda_n (t-s)\}\tilde{f}_n(s,x)\mathrm{d}s+Y_n(t,x)\\
\nonumber&=[{\mu}_n(0)e_n(x)-{Y}_n(0,x)]\exp(\lambda_n t)+\int_0^t\exp\{\lambda_n (t-s)\}\tilde{f}_n(s,x)\mathrm{d}s\\
&~~~+Y_n(t,x)=:{\mu}_n(t,x),~~n=1,2,\ldots
\end{align}
and the variance
\begin{align}\label{2.8}
 \nonumber Var({U}_n(t,x))&=Var(U_n(0,x))\exp(2\lambda_n t)-\frac{[\sigma q_ne_n(x)]^2}{2\lambda_n}\left[1-\exp(2\lambda_n t)\right]\\
 &=\nu_n(0)e_n^2(x)\exp(2\lambda_n t)+\frac{[\sigma q_ne_n(x)]^2}{2\lambda_n}\left[\exp(2\lambda_n t)-1\right]=:\nu_n(t,x),
\end{align}
if $\lambda_n\neq0$. Note that if there exists a positive integer $l$ such that $\lambda_l=0$, then $\nu_l(t,x)=Var({U}_l(t,x))=\nu_l(0)e_l^2(x)+[\sigma q_le_l(x)]^2t$.

In order to derive the PDF and Fokker--Planck equation of the SHE (\ref{1.1}) with the additive $Q$-Wiener noise and non-homogeneous boundary conditions, we now show that $\sum_{n=1}^{\infty}{\mu}_n(t,x)<\infty$ and $\sum_{n=1}^{\infty}\nu_n(t,x)<\infty$ under the assumptions ($\mathbf{A0}$)--($\mathbf{A1}$) in the following lemma.

\begin{lemma}\label{le.2.2}
Suppose that $(\mathbf{A0})$--$(\mathbf{A1})$ hold.  Then for every $(t,x)\in (0,\infty)\times [0,1]$, $\sum_{n=1}^{\infty}\mu_n(t,x)<\infty$ and $\sum_{n=1}^{\infty}\nu_n(t,x)<\infty$.
\end{lemma}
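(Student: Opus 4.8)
The plan is to control the two series by separating, in each, the contributions that carry a genuinely decaying factor from the single undamped boundary term. Three facts will be used repeatedly: the uniform bound $\sup_{x\in[0,1]}|e_n(x)|=\sqrt2$; the growth $\lambda_n=b-(a\beta_n)^2$, which gives $\lambda_n\le b$ for all $n$ and $\lambda_n\to-\infty$, so that $\lambda_n<0$ for all $n$ beyond some $N$ and, for each fixed $t>0$, both $\sum_n\exp(\lambda_n t)<\infty$ and $\sum_n|\lambda_n|^{-1}<\infty$; and the summability hypotheses $\sum_n|\mu_n(0)|<\infty$, $\sum_n|\nu_n(0)|<\infty$ from $(\mathbf{A1})$ together with $\mathrm{Tr}(Q)=\sum_n q_n^2<\infty$.

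For the variance I use the expression (\ref{2.8}) and the bound $e_n^2(x)\le2$. The first summand is dominated by $2|\nu_n(0)|\exp(2bt)$, whose series converges by $(\mathbf{A1})$. For $n>N$ the second summand is nonnegative and, since $0\le\tfrac{\exp(2\lambda_n t)-1}{2\lambda_n}\le t$, is bounded by $2\sigma^2 t\,q_n^2$ (equivalently by $\sigma^2 q_n^2|\lambda_n|^{-1}$); either estimate gives a convergent series via $\mathrm{Tr}(Q)<\infty$. The finitely many indices $n\le N$ — including any exceptional $l$ with $\lambda_l=0$, for which $\nu_l(t,x)=\nu_l(0)e_l^2(x)+\sigma^2 q_l^2 e_l^2(x)t$ — are individually finite. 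Hence $\sum_n\nu_n(t,x)<\infty$.

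For the mean I split $\mu_n(t,x)$ as in (\ref{2.7}) into the three terms carrying $\exp(\lambda_n t)$ and the remaining term $Y_n(t,x)$. The term $\mu_n(0)e_n(x)\exp(\lambda_n t)$ is dominated by $\sqrt2\,e^{bt}|\mu_n(0)|$, summable by $(\mathbf{A1})$. Since $|\langle Y(0,\cdot),e_n\rangle|\le\|Y(0,\cdot)\|_{L^2([0,1])}$ is bounded, the term $Y_n(0,x)\exp(\lambda_n t)$ is dominated by a constant times $\exp(\lambda_n t)$, summable because $\sum_n\exp(\lambda_n t)<\infty$. For the integral term I bound $|\tilde f_n(s)|=|\langle\tilde f(s,\cdot),e_n\rangle|\le\sqrt2\,\|\tilde f(s,\cdot)\|_{L^1([0,1])}$ and set $M_t:=\sup_{s\in[0,t]}\|\tilde f(s,\cdot)\|_{L^1}$, which is finite because $(\mathbf{A0})$ makes $\tilde f=f-\partial_tY+bY$ lie in $C([0,\infty);L^1([0,1]))$; then this term is at most $2M_t\int_0^t\exp\{\lambda_n(t-s)\}\,\mathrm{d}s\le 2M_t|\lambda_n|^{-1}$ for $n>N$, and $\sum_n|\lambda_n|^{-1}<\infty$. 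Thus these three contributions sum absolutely.

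It remains to show that $\sum_n Y_n(t,x)=\sum_n\langle Y(t,\cdot),e_n\rangle e_n(x)$ converges, and this is the genuinely delicate step. Because $Y(t,\cdot)$ is affine and does not meet the homogeneous boundary conditions, a direct computation gives $\langle Y(t,\cdot),e_n\rangle=-\sqrt2\,h(t)\beta_n^{-2}+\sqrt2\,g(t)(-1)^{n-1}\beta_n^{-1}$, so the coefficients decay only like $\beta_n^{-1}$ and the series is not absolutely convergent; the termwise estimates that dispatched every other contribution fail here. I would instead split off the $O(\beta_n^{-2})$ part, whose series converges absolutely, and treat the remainder $\sum_n 2g(t)(-1)^{n-1}\beta_n^{-1}\cos(\beta_n x)$ by Dirichlet's test: $\beta_n^{-1}\downarrow0$ while the partial sums of $\sum_n(-1)^{n-1}\cos(\beta_n x)$ are bounded uniformly in $N$ for each $x\in[0,1)$ (and every term vanishes at $x=1$, where $e_n(1)=0$). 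This yields convergence of $\sum_n Y_n(t,x)$ for every $x\in[0,1]$; alternatively one may invoke the classical pointwise convergence of Sturm--Liouville expansions of functions of bounded variation. Combined with the three absolutely convergent series, this gives $\sum_n\mu_n(t,x)<\infty$ and completes the proof. The boundary lift $Y$ is thus the only obstacle: it is precisely the mismatch with the homogeneous boundary conditions that destroys absolute convergence and forces the use of conditional (Dirichlet/Sturm--Liouville) convergence.
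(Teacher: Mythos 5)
Your proof is correct and, on the variance series and on the exponentially damped parts of the mean, it follows essentially the same route as the paper: bound $e_n^2(x)\le 2$, use $\sum_n|\nu_n(0)|<\infty$ and $\mathrm{Tr}(Q)<\infty$ together with $(\exp(2\lambda_n t)-1)/\lambda_n\le |\lambda_n|^{-1}$ for the finitely many non-negative $\lambda_n$ set aside, and use $\sum_n\exp(\lambda_n t)<\infty$ and $\int_0^t\exp\{\lambda_n(t-s)\}\,\mathrm{d}s\le|\lambda_n|^{-1}$ for the mean. The genuine difference lies in the term $\sum_nY_n(t,x)$. The paper simply replaces this sum by $|Y(t,x)|$ in its first inequality, i.e.\ it silently identifies $\sum_n\langle Y(t,\cdot),e_n\rangle e_n(x)$ with $Y(t,x)$; since $Y(t,\cdot)$ is affine and violates the homogeneous boundary conditions, its coefficients decay only like $\beta_n^{-1}$, so this series is not absolutely convergent and the identification needs the pointwise convergence theory for such eigenfunction expansions, which the paper does not supply. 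You isolate exactly this step, compute $\langle Y(t,\cdot),e_n\rangle=-\sqrt2\,h(t)\beta_n^{-2}+\sqrt2\,g(t)(-1)^{n-1}\beta_n^{-1}$ correctly, and close the gap with Dirichlet's test (the partial sums of $\sum_n(-1)^{n-1}\cos(\beta_n x)$ being bounded for each $x\in[0,1)$ and all terms vanishing at $x=1$). A second, minor improvement: you bound $|\tilde f_n(s)|\le\sqrt2\,\|\tilde f(s,\cdot)\|_{L^1}$, which is what $(\mathbf{A0})$ actually provides, whereas the paper invokes $\max_{(s,x)}|\tilde f(s,x)|$, a sup-norm bound not guaranteed by $f\in C^1([0,\infty);L^1([0,1]))$. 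In short, your argument proves the same statement by the same dominant-term strategy but is more careful where the paper is cavalier; what the paper's shortcut buys is brevity, and what yours buys is an actual proof that the non-absolutely-convergent boundary-lift series converges.
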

\begin{proof}
For every $(t,x)\in (0,\infty)\times [0,1]$, it follows from (\ref{2.7}), that
\begin{eqnarray}\label{2.9}
\sum_{n=1}^\infty\mu_n(t,x) &\leq& |Y(t,x)|+\sum_{n=1}^\infty|{\mu}_n(0)|+ \sum_{n=1}^\infty\Big\{\Big|\int_0^1{Y}(0,x)e_n(x)\mathrm{d}x\Big|\exp\{\lambda_n t\} \nonumber \\
&&+\int_0^t\Big|\int_0^1\tilde{f}(s,x)e_n(x)\mathrm{d}x\Big|\exp\{\lambda_n (t-s)\}\mathrm{d}s\Big\}
\nonumber \\
&\leq&\max_{x\in [0,1]}|Y(t,x)|+\sum_{n=1}^\infty|{\mu}_n(0)|+ \sum_{n=1}^\infty\sqrt{2}\Big\{\max_{x\in [0,1]}|Y(0,x)|\exp\{\lambda_n t\} \nonumber \\
&&+\max_{(s,x)\in [0,t]\times [0,1]}|\tilde{f}(s,x)| \int_0^t\exp\{\lambda_ns\}\mathrm{d}s\Big\}<\infty,
\end{eqnarray}
where in the second inequality we have used the fact that  $\tilde{f}(t,x)$ and $Y(t,x)$ are continuous in $[0,\infty)\times [0,1]$,  $\lambda_n=b-a^2(n-1/2)^2\pi^2$ and (\ref{2.6}).

For $\nu_n(t,x)$, one estimates from (\ref{2.8}) that
\begin{align}\label{2.10}
\sum_{n=1}^\infty{\nu}_n(t,x)&\leq 2\sum_{n=1}^\infty\nu_n(0)+\sum_{n=1}^\infty\frac{\sigma^2 q_n^2}{\lambda_n}\left[\exp(2\lambda_n t)-1\right].
\end{align}
Observe that $\lambda_n=b-a^2(n-1/2)^2\pi^2$, then there exists $l\in \mathbb{N}$ such that $\lambda_l<-1$. This, together with (\ref{2.10}), yields to
\begin{align}\label{2.11}
\sum_{n=1}^\infty{\nu}_n(t,x)&\leq 2\sum_{n=1}^\infty\nu_n(0)+\sum_{n=1}^{l-1}\frac{\sigma^2 q_n^2|\exp(2\lambda_n t)-1|}{|\lambda_n|}+\sum_{n=l}^\infty\sigma^2 q_n^2
<\infty.
\end{align}
By (\ref{2.9}) and (\ref{2.11}), we complete the proof.
\end{proof}

Let $X_1,X_2,\ldots$ be independent random variables with distributions $F_1,F_2,\ldots$, respectively. It is assumed that $\mathbb{E}X_k=0$ and that $\eta_k^2=\mathbb{E}(X_k^2)$ exists. The following lemma is given for the proof of Theorem \ref{th.2.4} below (see \cite[Section VIII.5]{Feller}).
\begin{lemma}\label{le.2.3}
If $\eta^2=\sum_{k=1}^\infty\eta_k^2<\infty$, the distributions $G_n$ of the partial sums $X_1+\ldots+X_n$ tend to a probability distribution $G$ with zero expectation and variance $\eta^2$.
\end{lemma}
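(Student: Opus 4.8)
The plan is to establish this classical fact by an elementary $L^2$ argument that simultaneously yields the weak convergence of the partial sums and the exact values of the limiting moments. Write $S_n = X_1 + \cdots + X_n$, so that $G_n$ is the law of $S_n$. Because the $X_k$ are independent with $\mathbb{E}X_k = 0$, their increments are orthogonal in $L^2$, and hence $\mathbb{E}[S_n^2] = \sum_{k=1}^n \eta_k^2 \le \eta^2 < \infty$; in particular the sequence $\{S_n\}$ is bounded in $L^2$.

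The key step is to show that $\{S_n\}$ is a Cauchy sequence in $L^2$. For $m < n$, independence and the mean-zero hypothesis give
\[
\mathbb{E}[(S_n - S_m)^2] = \sum_{k=m+1}^n \mathbb{E}[X_k^2] = \sum_{k=m+1}^n \eta_k^2,
\]
which is a tail of the convergent series $\sum_{k} \eta_k^2$ and therefore tends to $0$ as $m,n \to \infty$. By completeness of $L^2$, there exists a random variable $S_\infty$ with $S_n \to S_\infty$ in $L^2$.

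It then remains to read off the asserted properties of the limit. Since $L^2$-convergence implies convergence in probability and hence convergence in distribution, the laws $G_n$ converge weakly to the law $G$ of $S_\infty$, so $G$ is a genuine probability distribution. Moreover, $L^2$-convergence forces convergence of the first two moments, whence $\mathbb{E}[S_\infty] = \lim_n \mathbb{E}[S_n] = 0$ and $\mathbb{E}[S_\infty^2] = \lim_n \sum_{k=1}^n \eta_k^2 = \eta^2$; thus $G$ has zero expectation and variance $\eta^2$, as claimed.

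Being a textbook result, this carries no real obstacle; the one point deserving care is that the moment statements must be drawn from the $L^2$ limit rather than from the weak convergence alone, since weak convergence in general does not transmit second moments. An alternative route, nearer to Feller's exposition, would express the characteristic function of $S_n$ as the product $\prod_{k=1}^n \phi_k(t)$ of the individual characteristic functions, show that this product converges to a characteristic function, and invoke L\'evy's continuity theorem; the $L^2$ argument above is preferable here because it is self-contained and pins down the limiting mean and variance at the same time.
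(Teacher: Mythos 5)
Your proof is correct. Note, however, that the paper does not prove this lemma at all: it is quoted verbatim as a known result from Feller (Section VIII.5), so there is no internal proof to compare against. Your self-contained argument is the standard one and is essentially what underlies Feller's treatment and the Khintchine--Kolmogorov one-series theorem: orthogonality of the centered independent increments gives $\mathbb{E}[(S_n-S_m)^2]=\sum_{k=m+1}^n\eta_k^2$, the tail of a convergent series, so $\{S_n\}$ is Cauchy in $L^2$ and converges to some $S_\infty$; convergence in $L^2$ then yields both weak convergence of the laws $G_n$ to the law $G$ of $S_\infty$ and convergence of the first two moments, pinning down $\mathbb{E}[S_\infty]=0$ and $\operatorname{Var}(S_\infty)=\eta^2$. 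Your closing caveat is well taken: the moment identities must come from the $L^2$ limit, since weak convergence alone does not transfer second moments. The only cosmetic point is that the orthogonality step implicitly uses $\mathbb{E}[X_jX_k]=\mathbb{E}[X_j]\mathbb{E}[X_k]=0$ for $j\neq k$, which requires the $X_k$ to be square-integrable (guaranteed here by the hypothesis that each $\eta_k^2$ exists); it would be worth stating this explicitly.
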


\begin{theorem}\label{th.2.4}
 Let ${\mu}(t,x)=\sum_{n=1}^{\infty}{\mu}_n(t,x)$ and $\nu(t,x)=\sum_{n=1}^{\infty}\nu_n(t,x)$. Under the assumptions of Lemma \ref{le.2.2}, the PDF of the SHE $(\ref{1.1})$ is
\begin{align}\label{2.12}
p(u,t,x)=\frac{1}{\sqrt{2\pi\nu(t,x)}}\exp\left\{-\frac{(u-{\mu}(t,x))^2}{2\nu(t,x)}\right\},~~~~~~~~~t\geq0,
\end{align}
and it satisfies the following Fokker--Planck equation
\begin{align}\label{2.13}
  \left\{
\begin{array}{ll}
\frac{\partial}{\partial t}p(u,t,x)={\mathcal{M}}(t,x)\frac{\partial}{\partial u}p(u,t,x)+\frac{1}{2}{\mathcal{G}}(t,x)\frac{\partial^2}{\partial u^2}p(u,t,x),\\ [0.2cm]
p(u,0,x)=\frac{1}{\sqrt{2\pi\nu(0,x)}}\exp\left\{-\frac{(u-{\mu}(0,x))^2}{2\nu(0,x)}\right\},
\end{array}
\right.
\end{align}
where ${\mathcal{M}}(t,x)=-\frac{\partial{\mu}(t,x)}{\partial t}$ and ${\mathcal{G}}(t,x)=\frac{\partial\nu(t,x)}{\partial t}$. Furthermore, the transition probability density $p(u,t,x|w,s,x)$ of the process $U$ satisfies the following Chapman--Kolmogorov equation
\begin{align}\label{2.14}
 p(u,t,x|w,s,x)=\int_{\mathbb{R}}p(u,t,x |v,r,x)p(v,r,x |w,s,x)dv ,~~0<s<r<t.
\end{align}
\end{theorem}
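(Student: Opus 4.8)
The plan is to establish the three assertions in turn, leaning on the modewise Gaussianity recorded in (\ref{2.4})--(\ref{2.8}).

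\emph{The probability density.} For each fixed $(t,x)$ the variable $U_n(t,x)$ in (\ref{2.5}) is Gaussian: by (\ref{2.4}) it is an affine image of the normal coefficient $U_n(0)$ plus the Wiener integral $\sigma q_n e_n(x)\int_0^t e^{\lambda_n(t-s)}\,\mathrm{d}W_n(s)$, and the Wiener integral of a deterministic integrand is centred Gaussian; hence $U_n(t,x)\sim N(\mu_n(t,x),\nu_n(t,x))$ with $\mu_n,\nu_n$ as in (\ref{2.7})--(\ref{2.8}). I would then set $X_n:=U_n(t,x)-\mu_n(t,x)$; these are independent across $n$ (inherited from the independence of the $W_n$ and of the initial Fourier modes), centred, with variances $\nu_n(t,x)$ summing to $\nu(t,x)<\infty$ by Lemma \ref{le.2.2}. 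Lemma \ref{le.2.3} then gives $\sum_{n=1}^N X_n\Rightarrow N(0,\nu(t,x))$; since $\sum_n\mu_n(t,x)=\mu(t,x)$ converges and $\sum_n|\mu_n(t,x)|<\infty$, $\sum_n\nu_n(t,x)<\infty$ (Lemma \ref{le.2.2}) force $L^2$-convergence of the defining series $\sum_n U_n(t,x)$, the limit $U(t,x)$ is $N(\mu(t,x),\nu(t,x))$, i.e. its density is (\ref{2.12}).

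\emph{The Fokker--Planck equation.} This is a direct differentiation of (\ref{2.12}). Writing $\mu=\mu(t,x)$, $\nu=\nu(t,x)$, differentiating $\ln p$ gives
\[
\frac{\partial_t p}{p}=-\frac{\partial_t\nu}{2\nu}+\frac{(u-\mu)\,\partial_t\mu}{\nu}+\frac{(u-\mu)^2\,\partial_t\nu}{2\nu^2},
\]
while $\partial_u p=-\tfrac{u-\mu}{\nu}\,p$ and $\partial_u^2 p=\big(\tfrac{(u-\mu)^2}{\nu^2}-\tfrac1\nu\big)p$. Substituting $\mathcal{M}=-\partial_t\mu$ and $\mathcal{G}=\partial_t\nu$ into $\mathcal{M}\partial_u p+\tfrac12\mathcal{G}\partial_u^2 p$ reproduces $\partial_t p$ by matching the coefficients of $1$, $(u-\mu)$ and $(u-\mu)^2$; the $u$-independence of $\mu,\nu$ is precisely what lets the drift term appear as $\mathcal{M}\partial_u p$ rather than $\partial_u(\mathcal{M}p)$. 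The initial condition in (\ref{2.13}) is (\ref{2.12}) at $t=0$.

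\emph{The Chapman--Kolmogorov equation.} I would take $p(u,t,x\,|\,w,s,x)$ to be the fundamental solution of the linear equation (\ref{2.13}), namely the Gaussian kernel in $u$ with mean $w+\mu(t,x)-\mu(s,x)$ and variance $\nu(t,x)-\nu(s,x)$ obtained by solving (\ref{2.13}) from $\delta(\cdot-w)$ at time $s$. Identity (\ref{2.14}) is then the convolution rule for Gaussians: for $s<r<t$ the means $\big(\mu(t)-\mu(r)\big)+\big(\mu(r)-\mu(s)\big)$ and the variances $\big(\nu(t)-\nu(r)\big)+\big(\nu(r)-\nu(s)\big)$ telescope to $\mu(t)-\mu(s)$ and $\nu(t)-\nu(s)$, so integrating the product of the two kernels in $v$ returns $p(u,t,x\,|\,w,s,x)$; equivalently (\ref{2.14}) is the composition law $T(t,s)=T(t,r)T(r,s)$ for the propagator generated by (\ref{2.13}). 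The step I expect to be the real obstacle is this last one, and the difficulty is conceptual rather than computational. Because $U(t,x)=\sum_n U_n(t,x)$ superposes Ornstein--Uhlenbeck modes with distinct rates $\lambda_n$, the scalar process $t\mapsto U(t,x)$ is in general not Markov, so (\ref{2.14}) cannot be read as an identity for a genuine conditional law; it must be understood as a statement about the Fokker--Planck propagator, equivalently about the Gaussian increment structure with variance increments $\nu(t,x)-\nu(s,x)$. Making this interpretation precise, and in particular securing the positivity $\nu(t,x)-\nu(s,x)>0$ (i.e. $\mathcal{G}(t,x)=\partial_t\nu\ge 0$) needed for the kernel to be a bona fide density and for (\ref{2.13}) to be genuinely parabolic, is the delicate point, since this monotonicity is not automatic from (\ref{2.8}) and would have to be extracted from the structure of the $\nu_n$.
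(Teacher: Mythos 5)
Your treatment of the density and of the Fokker--Planck equation matches the paper's: the paper likewise observes that each mode $U_n(t,x)$ is $N(\mu_n(t,x),\nu_n(t,x))$, centres the modes, invokes Lemma \ref{le.2.2} and Lemma \ref{le.2.3}, and then identifies the limit law --- the only cosmetic difference is that the paper passes through the characteristic functions $\varphi_{S_n}(\theta,x)=\exp\{i\theta\sum_k\mu_k-\tfrac12\theta^2\sum_k\nu_k\}$ rather than through $L^2$-convergence, and it dispatches \eqref{2.13} as ``simple calculations'', which is exactly your direct differentiation. Where you genuinely diverge is the Chapman--Kolmogorov identity. The paper argues that each $\{U_n(t,x)\}_{t\ge0}$ is Markov and then asserts that, by independence of the $W_n$, the sum $U(t,x)=\sum_n U_n(t,x)$ is itself a Markov process, from which \eqref{2.14} follows for the genuine conditional densities. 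You instead decline to claim Markovianity of the scalar sum (correctly noting that a superposition of Ornstein--Uhlenbeck modes with distinct rates $\lambda_n$ is Gaussian but in general fails the Markov factorization of its covariance), and you read \eqref{2.14} as the composition law for the fundamental solution of the linear equation \eqref{2.13}, verified by telescoping Gaussian convolution of kernels with mean increments $\mu(t,x)-\mu(s,x)$ and variance increments $\nu(t,x)-\nu(s,x)$. Your route is the more defensible one: the paper's step from ``each mode is Markov and the modes are independent'' to ``the sum is Markov'' is not justified as written, whereas your propagator interpretation makes \eqref{2.14} an unconditional identity --- at the price of having to secure $\nu(t,x)-\nu(s,x)>0$ (equivalently $\mathcal{G}\ge0$), a positivity condition the paper only imposes later, in the hypothesis of Theorem \ref{th.2.7} and the remark following it, and which you rightly flag as not automatic from \eqref{2.8}.
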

\begin{proof}
By (\ref{2.7})-(\ref{2.8}), $U_n(t,x)$ obeys the normal distribution $N({\mu}_n(t,x),\nu_n(t,x))$, $t>0$. Define $\mathcal{Y}_n(t,x)=U_n(t,x)-{\mu}_n(t,x)$. Then $\mathcal{Y}_n(t,x)$ is distributed as $N(0,\nu_n(t,x)),~t>0$. With the aid of Lemmas \ref{le.2.2} and \ref{le.2.3}, $\sum_{k=1}^n\mathcal{Y}_k(t,x)$ tends to a probability distribution with zero expectation and variance $\nu(t,x)$. It follows from (\ref{2.5}) and Lemma \ref{le.2.2} that the random variable $U(t,x)$ is distributed with expectation ${\mu}(t,x)$ and variance $\nu(t,x)$.

Let $\varphi_{U_k}(\theta,x) = \exp({i{\mu}_k(t,x)\theta - \frac{1}{2}\nu_k(t,x)\theta^2})$ be the characteristic function of $U_k(t,x)$. Then, the characteristic function of $ S_n(t,x)=\sum_{k = 1}^n U_k(t,x)$ is
    \begin{equation*}
        \varphi_{S_n}(\theta,x) = \prod_{k = 1}^n \varphi_{U_k}(\theta,x) = \exp\Big({i\theta\sum_{k = 1}^n {\mu}_k(t,x) - \frac{1}{2}\theta^2\sum_{k = 1}^n \nu_k(t,x)}\Big).
    \end{equation*}
Thanks to Lemma \ref{le.2.2}, we have
    \begin{equation*}
        \lim_{n\rightarrow\infty} \varphi_{S_n}(\theta,x) = \exp\big({i\theta{\mu}(t,x) - \frac{1}{2}\theta^2{\nu}(t,x)}\big),
    \end{equation*}
which is the characteristic function of the normal distribution $N\left({\mu}(t,x), {\nu}(t,x)\right)$. Therefore the PDF of the SHE (\ref{1.1}) is (\ref{2.12}). Moreover, (\ref{2.13}) holds by simple calculations.

On the other hand, it follows from (\ref{2.4}) and (\ref{2.5}) that the stochastic process $\{U_n(t,x)\}_{t\geq 0}$ is a Markov process, i.e, it satisfies $\mathbf{P}\{U_n(t,x)\in A|\mathcal{F}_{n,s}\}=\mathbf{P}\{U_n(t,x)\in A|U_n(s,x)\}$, where $A$ is a Borel measurable set in $\mathbb{R}$ and the $\sigma$-algebra $\mathcal{F}_{n,s}$ is generated by $\{U_n(r,x)|0\leq r\leq s\}$. In view of $U(t,x)=\sum_{n=1}^\infty U_n(t,x)$ and the independence of $W_n(t)$,  $U(t,x)$ is a Markov process. Therefore, the Chapman-Kolmogorov equation (\ref{2.14}) holds.
\end{proof}

\begin{sidewaystable}
    \centering 
    \caption{Non-homogeneous boundary conditions and the corresponding $Y(t,x)$ }
\renewcommand{\arraystretch}{2.5}
\begin{tabular}{|c|c|c|c|}
  \hline
    Case& Boundary Condition at $x=0$  & Boundary Condition at $x=1$ & $Y(t,x)$ \\ \hline
   1 & $U(t,0)=g(t)$ & $U(t,1)=h(t)$ & $[h(t)-g(t)]x+g(t)$ \\ \hline
   2 & $U(t,0)=g(t)$ & $\frac{\partial}{\partial x}U(t,1)=h(t)$ & $h(t)x+g(t)$ \\ \hline
   3 & $\frac{\partial}{\partial x}U(t,0)=g(t)$ & $\frac{\partial}{\partial x}U(t,1)=h(t)$ & $\frac{h(t)-g(t)}{2}x^2+g(t)x$\\ \hline
   4 & $U(t,0)=g(t)$ & $(\frac{\partial}{\partial x}U+\gamma U)(t,1)=h(t)$ & $\frac{h(t)-\gamma g(t)}{1+\gamma}x+g(t)$ \\ \hline
   5 &  $\frac{\partial}{\partial x}U(t,0)=g(t)$ & $(\frac{\partial}{\partial x}U+\gamma U)(t,1)=h(t)$ & $g(t)x+\frac{h(t)-(1+\gamma) g(t)}{\gamma}$ \\ \hline
   6 & $(\frac{\partial}{\partial x}U-\gamma U)(t,0)=g(t)$ & $U(t,1)=h(t)$ & $\frac{g(t)+\gamma h(t)}{1+\gamma}x+\frac{h(t)-g(t) }{1+\gamma}$ \\ \hline
   7 & $(\frac{\partial}{\partial x}U-\gamma U)(t,0)=g(t)$ & $\frac{\partial}{\partial x}U(t,1)=h(t)$ & $h(t)x+\frac{h(t)-g(t)}{\gamma}$ \\ \hline
   8 & $(\frac{\partial}{\partial x}U-\gamma_1 U)(t,0)=g(t)$ & $(\frac{\partial}{\partial x}U+\gamma_2 U)U(t,1)=h(t)$ & $\frac{\gamma_1h(t)+\gamma_2g(t)}{\gamma_1+\gamma_2+\gamma_1\gamma_2}x+\frac{h(t)-(1+\gamma_2)g(t)}{\gamma_1+\gamma_2+\gamma_1\gamma_2}$ \\
  \hline
\end{tabular}
\end{sidewaystable}
\begin{remark}
\rm
In addition to the non-homogeneous boundary conditions in (\ref{1.1}), the functions $Y(t,x)$ for other non-homogeneous boundary conditions are given in the following table. For the equation $\frac{\partial^2}{\partial x^2}V(x)+\beta V(x)=0,~0<x<1$, the eigenfunction of $\frac{\partial^2}{\partial x^2}$ with homogeneous boundary conditions in (\ref{2.2}) is $e_n(x)=\sqrt{2}\cos(\beta_n x)$, $n=1,2,\ldots$, where $\beta_n=\left(n-1/ {2}\right)\pi$. Moreover, we can find the corresponding eigenvectors for other homogeneous boundary conditions, e.g., $V(0)=V(1)=0$, $V(0)=\frac{\partial}{\partial x}V(1)=0$, $\frac{\partial}{\partial x}V(0)=\frac{\partial}{\partial x}V(1)=0$, $V(0)=\frac{\partial}{\partial x}V(1)+\gamma V(1)=0$, $\frac{\partial}{\partial x}V(0)=\frac{\partial}{\partial x}V(1)+\gamma V(1)=0$, $\frac{\partial}{\partial x}V(0)-\gamma V(0)=V(1)=0$, $\frac{\partial}{\partial x}V(0)-\gamma V(0)=\frac{\partial}{\partial x}V(1)=0$ and $\frac{\partial}{\partial x}V(0)-\gamma_1 V(0)=\frac{\partial}{\partial x}V(1)+\gamma_2 V(1)=0$. Therefore, for the SHE under non-homogeneous boundary conditions as presented in Table $1$, we can still use the method of eigenfunction expansion to obtain an explicit expression for its PDF, as well as the corresponding Fokker--Planck equation.
\end{remark}
\begin{remark}\label{re.2.6}
\rm
In addition to the normal distribution mentioned in assumption $(\mathbf{A1})$, the initial value $U_n( 0)$ can obey other distributions with finite first and second moments, including but not limited to the uniform distribution and the exponential distribution. Additionally, if $U_n( 0)$ is a constant, it follows a Dirac delta distribution centered at that constant value.
\end{remark}
To give the probabilistic representation for the solution of (\ref{2.13}) with time-dependent coefficients, for $u\in \mathbb{R}$, $x\in (0,1)$ and $\tilde{t}\in[0,T]$, define $\tilde{p}(u,\tilde{t},x):=p(u,T-\tilde{t},x),$ $\tilde{\mathcal{M}}(\tilde{t},x):=\mathcal{M}( T-\tilde{t},x)$, and $\tilde{\mathcal{G}}(\tilde{t},x):=\mathcal{G}(T-\tilde{t},x)$. Then $\tilde{p}(u,\tilde{t},x)$ satisfies the following backward Fokker-Planck equation
\begin{align}\label{2.15}
\frac{\partial \tilde{p}(u,\tilde{t},x)}{\partial \tilde{t}}\!+\frac{1}{2}\tilde{\mathcal{G}}(\tilde{t},x)\frac{\partial^2 \tilde{p}(u,\tilde{t},x)}{\partial{u}^2}+\tilde{\mathcal{M}}(\tilde{t},x)\frac{\partial \tilde{p}(u,\tilde{t},x)}{\partial{u}}=0,
\end{align}
with the terminal value $\tilde{p}(u,T, x)=p(u,0,x).$ In view of $\tilde{p}(u,\tilde{t},x)=p(u,T-\tilde{t},x)$ and (\ref{2.12}), we have
\begin{align}\label{2.16}
\tilde{p}(u,\tilde{t},x)=\frac{1}{\sqrt{2\pi\nu(T-\tilde{t},x)}}\exp\left\{-\frac{(u-{\mu}(T-\tilde{t},x))^2}{2\nu(T-\tilde{t},x)}\right\}.
\end{align}

From the above equality, the solution of the equation (\ref{2.15}) exists. Moreover, by the similar arguments as Theorem 3.8 in \cite{Meng}, the uniqueness of the solution for the backward Fokker--Planck equation (\ref{2.15}) follows. Let $\tilde{t}=T-t$. Then $p(u,t,x)=\tilde{p}(u,T-t,x)$ and the solution ${p}(u,{t},x)$ of the Fokker--Planck equation (\ref{2.13}) is unique. The following theorem provides an alternative form of the solution $p(u,t,x)$ through a probability expression.
\begin{theorem}\label{th.2.7}
Under the assumption of Theorem \ref{th.2.4}. Suppose that $$\sum\limits_{n=1}^\infty[2\lambda_n\nu_n(0)+(\sigma q_n)^2]e_n^2(x)\exp(2\lambda_n s)>0$$ for $(s,x)\in[0,T)\times(0,1)$ and $\hat{U}(s,x)$ is a solution of the following SDE
\begin{align}\label{2.17}
 d{\hat{U}( s,x)}=\tilde{\mathcal{M}}( s,x){ds}+\sqrt{\tilde{\mathcal{G}}(s,x)}dB(s),~T-t\leq s\leq T,~~x\in(0,1)
  \end{align}
with the initial condition $\hat{U}(T-t,x)=u$. Then the density function $p$ of the SHE  $(\ref{1.1})$ admits the probabilistic representation
\begin{align}\label{2.18}
p(u, t,x)=\mathbb{E}\big[p(\hat{U}(T,x),0,x)|{\hat{U}( T-{t},x)=u}\big],~~( u ,t,x)\in\mathbb{R}\times[0,T]\times[0,1].
\end{align}
\end{theorem}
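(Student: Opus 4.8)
The plan is to recognize the backward Fokker--Planck equation (\ref{2.15}) as the backward Kolmogorov (terminal-value) equation of the diffusion $\hat{U}$ introduced in (\ref{2.17}), and then to read off (\ref{2.18}) from the Feynman--Kac formula. Since the uniqueness of the solution of (\ref{2.15}) with terminal datum $\tilde{p}(u,T,x)=p(u,0,x)$ has already been recorded, and (\ref{2.16}) exhibits the explicit Gaussian solution, it suffices to show that the right-hand side of (\ref{2.18}) also solves (\ref{2.15}) with the same terminal datum; the two expressions must then agree, which is precisely the asserted representation.

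First I would settle the well-posedness of (\ref{2.17}). Differentiating the series in (\ref{2.8}) term by term gives
\[
\mathcal{G}(s,x)=\frac{\partial\nu(s,x)}{\partial t}=\sum_{n=1}^{\infty}\big[2\lambda_n\nu_n(0)+(\sigma q_n)^2\big]e_n^2(x)\exp(2\lambda_n s),
\]
so the standing hypothesis is exactly $\mathcal{G}(s,x)>0$ on $[0,T)\times(0,1)$, whence $\tilde{\mathcal{G}}(s,x)=\mathcal{G}(T-s,x)>0$ and $\sqrt{\tilde{\mathcal{G}}(s,x)}$ is a genuine, real diffusion coefficient. The coefficients $\tilde{\mathcal{M}}(\cdot,x)$ and $\tilde{\mathcal{G}}(\cdot,x)$ depend only on time (with $x$ a fixed parameter) and are continuous on the compact interval $[T-t,T]$ by the convergence obtained in Lemma \ref{le.2.2}; hence (\ref{2.17}) has the unique strong (Gaussian) solution
\[
\hat{U}(s,x)=u+\int_{T-t}^{s}\tilde{\mathcal{M}}(r,x)\,dr+\int_{T-t}^{s}\sqrt{\tilde{\mathcal{G}}(r,x)}\,dB(r).
\]
Moreover, $\mathcal{G}>0$ forces $\nu(\cdot,x)$ to be strictly increasing, so $\nu(T-s,x)\ge\nu(0,x)>0$ stays bounded away from zero along the interval, which guarantees that the Gaussian density $\tilde{p}(\cdot,s,x)$ in (\ref{2.16}) is smooth and uniformly bounded.

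Next I would apply It\^o's formula to $\tilde{p}(\hat{U}(s,x),s,x)$ over $s\in[T-t,T]$, which yields
\[
d\,\tilde{p}(\hat{U}(s,x),s,x)=\Big[\partial_{\tilde t}\tilde{p}+\tilde{\mathcal{M}}(s,x)\,\partial_u\tilde{p}+\tfrac{1}{2}\tilde{\mathcal{G}}(s,x)\,\partial_{uu}\tilde{p}\Big]ds+\sqrt{\tilde{\mathcal{G}}(s,x)}\,\partial_u\tilde{p}\,dB(s),
\]
where every derivative is evaluated at $(\hat{U}(s,x),s,x)$ and the drift bracket vanishes identically by (\ref{2.15}). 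Thus $\tilde{p}(\hat{U}(s,x),s,x)$ is a local martingale; integrating from $s=T-t$ (where $\hat{U}=u$) to $s=T$ and taking conditional expectations gives $\tilde{p}(u,T-t,x)=\mathbb{E}\big[\tilde{p}(\hat{U}(T,x),T,x)\,\big|\,\hat{U}(T-t,x)=u\big]$. Inserting the terminal condition $\tilde{p}(u,T,x)=p(u,0,x)$ and reverting the time change via $p(u,t,x)=\tilde{p}(u,T-t,x)$ then delivers (\ref{2.18}).

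The hard part will be the passage from local to true martingale needed to justify taking the expectation in the last step, that is, showing that $\int_{T-t}^{T}\sqrt{\tilde{\mathcal{G}}(s,x)}\,\partial_u\tilde{p}(\hat{U}(s,x),s,x)\,dB(s)$ is a genuine martingale. Because $\tilde{p}$ is the explicit Gaussian density (\ref{2.16}), its $u$-derivative decays like $(u-\mu)\exp\{-(u-\mu)^2/2\nu\}$ and is bounded by a constant depending only on $\inf_s\nu(T-s,x)=\nu(0,x)>0$; combined with the finiteness of the moments of the Gaussian process $\hat{U}$ on the compact interval and the boundedness of $\tilde{\mathcal{G}}(\cdot,x)$, a direct $L^2$ (or Burkholder--Davis--Gundy) estimate closes the gap. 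I would also record separately that the term-by-term differentiations producing $\mathcal{M}=-\partial_t\mu$ and $\mathcal{G}=\partial_t\nu$ converge uniformly on compact time intervals, so these coefficients are well defined and continuous; this is the analogue of Lemma \ref{le.2.2} at the level of the time derivatives, and it is once more the factors $\exp(2\lambda_n s)$ with $\lambda_n\to-\infty$ that secure the convergence.
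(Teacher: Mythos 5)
Your proposal is correct and follows essentially the same route as the paper: apply It\^o's formula to $\tilde{p}(\hat{U}(s,x),s,x)$, use the backward equation (\ref{2.15}) to kill the drift, take conditional expectations, and undo the time reversal. The only difference is a minor technical one --- you justify taking expectations by bounding the integrand of the stochastic integral directly (using that $\nu$ stays bounded away from zero), whereas the paper localizes with the stopping times $\tau_R$ and invokes Dynkin's formula together with the boundedness of $\tilde{p}$ before letting $R\to\infty$; both devices are standard and equivalent here.
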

\begin{proof}
Applying It\^{o}'s formula to the function $\bar{Y}(s)=\tilde{p}(\hat{U}(s,x), s,x)$, it follows that
\begin{align*}
\begin{array}{clc}
\mathrm{d}\bar{Y}(s)&=\left[\frac{\partial\tilde{p}(\hat{U}(s,x), s,x)}{\partial s}+\tilde{\mathcal{M}}( s,x)\frac{\partial\tilde{p}(\hat{U}(s,x), s,x)}{\partial u}+\frac{1}{2}\tilde{\mathcal{G}}(s,x)\frac{\partial^2\tilde{p}(\hat{U}(s,x), s,x)}{\partial u^2}\right]\mathrm{d}s\\
&\quad +\frac{\partial\tilde{p}(\hat{U}(s,x), s,x)}{\partial u}\sqrt{\tilde{\mathcal{G}}(s,x)}\mathrm{d}B(s)
\\ &=\frac{\partial\tilde{p}(\hat{U}(s,x), s,x)}{\partial u}\sqrt{\tilde{\mathcal{G}}(s,x)}\mathrm{d}B(s),
\end{array}
\end{align*}
where in the second identity we have used  (\ref{2.15}).

Thanks to Dynkin's formula,
\begin{align*}
\begin{array}{clc}
&~~~\mathbb{E}[\bar{Y}(T\wedge\tau_R)|\hat{U}(\tilde{t},x)=u]\\
&=\mathbb{E}[\bar{Y}(\tilde{t})|\hat{U}(\tilde{t},x)=u]+\mathbb{E}[\int_{\tilde{t}}^{T\wedge\tau_R}\frac{\partial\tilde{p}(\hat{U}(s,x), s,x)}{\partial u}\sqrt{\tilde{\mathcal{G}}(s,x)}\mathrm{d}B(s)|\hat{U}(\tilde{t},x)=u]~~~~~~~~~~~~~~~~~~~~~~~~~~\\
&=\mathbb{E}[\bar{Y}(\tilde{t})|\hat{U}(\tilde{t},x)=u]=\tilde{p}(u, \tilde{t},x),
\end{array}
\end{align*}
where $\tau_R=\inf\{s>0:|\hat{U}(s,x)|>R\}$ for all $R>0$. From $\nu(s,x)>0$ for $(s,x)\in[0,T)\times(0,1)$ and (\ref{2.16}), we obtain that $\tilde{p}$ is bounded, and
\begin{align*}
\begin{array}{clc}
\tilde{p}(u, \tilde{t},x)=\lim\limits_{R\rightarrow\infty}\mathbb{E}
[\bar{Y}(T\wedge\tau_R)|\hat{U}(\tilde{t},x)=u]=\mathbb{E}\big[ \tilde{p}(\hat{U}(T,x),T,x)|{\hat{U}( \tilde{t},x)=u}\big].
\end{array}
\end{align*}
Therefore, the assertion of this theorem follows immediately by letting $\tilde{p}(u,\tilde{t},x)=p(u,T-\tilde{t},x)$ and $\tilde{t}=T-t$.
\end{proof}
\begin{remark}
\rm
Note that the condition $\frac{\partial}{\partial s}\nu(s,x)=\sum_{n=1}^\infty[2\lambda_n\nu_n(0)+(\sigma q_n)^2]e_n^2(x)\exp(2\lambda_n s)>0$ ensures that $\tilde{\mathcal{G}}(s,x)>0$ for $(s,x)\in[0,T)\times(0,1)$.
\end{remark}

\begin{example}
\rm
Let $a=1,b=1,\alpha=1/2,f(t,x)=\cos(t)e_1(x),\sigma=1,q_n=1/n,h(t)=\cos(t),g(t)=\sin(t)$ and $U_0(x)=U_1(0)e_1(x)$ where $e_n(x)=\sqrt{2}\cos(\beta_nx)$, $\beta_n=(n-1/2)\pi$, $n=1,2,\ldots,$ and the random variable $U_1(0)$ obeys the normal distribution $N(0,1/16)$.  We deduce from (\ref{2.7}) and (\ref{2.8}) that
\begin{align}\label{2.19}
\nonumber{\mu}(t,x)&=\sum_{n=1}^{\infty}{\mu}_n(t,x)=\cos(t)[x-1]+\sin(t)+\sum_{n=1}^{\infty}\frac{\sqrt{2}}{\beta_n}\left[\frac{1}{\beta_n}+(-1)^{n}\right]\exp(\lambda_n t)\\
&~~~+\frac{\sin(t)-\cos(t)+\lambda_n\exp(\lambda_nt)}{1+\lambda_n^2}\left\{I_{\{n=1\}}+\frac{\sqrt{2}}{\beta_n}\left[(-1)^{n+1}-\frac{1}{\beta_n}\right]\right\}
\end{align}
and
\begin{align}\label{2.20}
\nu(t,x)=\sum_{n=1}^{\infty}\nu_n(t,x)=\frac{1}{16}e_1^2(x)\exp(2\lambda_1 t)+\sum_{n=1}^{\infty}\frac{e_n^2(x)}{2\lambda_nn^2}\left[\exp(2\lambda_n t)-1\right],
\end{align}
where $\lambda_n=1-\beta_n^2$. Thanks to Theorem \ref{th.2.4}, the PDF of the SHE $(\ref{1.1})$ is
\begin{align}\label{2.21}
p(u,t,x)=\frac{1}{\sqrt{2\pi\nu(t,x)}}\exp\left\{-\frac{(u-{\mu}(t,x))^2}{2\nu(t,x)}\right\}, \quad t>0.
\end{align}
By Theorem \ref{th.2.7}, $p(u, t,x)$ also has the following probabilistic representation
\begin{align}\label{2.22}
p(u, t,x)=\mathbb{E}\big[ p(\hat{U}(T,x),0,x)|{\hat{U}(T-{t},x)=u}\big], \quad (u ,t,x)\in\mathbb{R}\times[0,T]\times[0,1],
\end{align}
where $p(u, 0,x)$ is given in (\ref{2.13}) and $\hat{U}\left( s,x\right)$ is an It\^{o} process satisfying
\begin{align*}
 d{\hat{U}( s,x)}=\tilde{\mathcal{M}}( s,x){ds}+\sqrt{\tilde{\mathcal{G}}(s,x)}dB(s), \quad  T-t\leq s\leq T,~~x\in(0,1)
  \end{align*}
with the initial value $\hat{U}( T-{t},x)=u$. Here
\begin{align*}
\tilde{\mathcal{M}}(s,x)&=\frac{\partial{\mu}( T-s,x)}{{\partial s}}=\sin(T-s)[x-1]-\cos(T-s)\\
&~~~-\sum_{n=1}^{\infty}\frac{\cos(T\!-\!s)\!+\!\sin(T\!-\!s)\!+\!\lambda_n^2\exp\{\lambda_n (T\!-\!s)\}}{1+\lambda_n^2}\left\{I_{\{n=1\}}\!+\!\frac{\sqrt{2}}{\beta_n}\left[(-1)^{n+1}\!-\!\frac{1}{\beta_n}\right]\right\}\\
&~~~+\sum_{n=1}^{\infty}\frac{\sqrt{2}\lambda_n}{\beta_n}\left[(-1)^{n+1}-\frac{1}{\beta_n}\right]\exp\{\lambda_n (T-s)\},\\
\tilde{\mathcal{G}}(s,x)&=-\frac{\partial\nu(T-s,x)}{\partial s}=\Big(\frac{36-\pi^2}{32}\Big)e_1^2(x)\exp\{2\lambda_1 (T-s)\}+\sum_{n=2}^{\infty}\frac{e_n^2(x)}{n^2}\exp\{2\lambda_n (T-s)\}.
\end{align*}

By choosing $10^4$ samples of $\hat{U}( t,x)$ and $n=1,2,\ldots,10$, Figure \ref{1} plots the PDFs $p(u, 1,x)$ of (\ref{1.1}) with different $x$, and diverse expressions (\ref{2.21}) and (\ref{2.22}). It shows that the evolution of the probability density for (\ref{1.1}) is related to the spatial position $x$.
\end{example}
\begin{figure}
\centering%
\includegraphics[height=2.5in,width=5in]{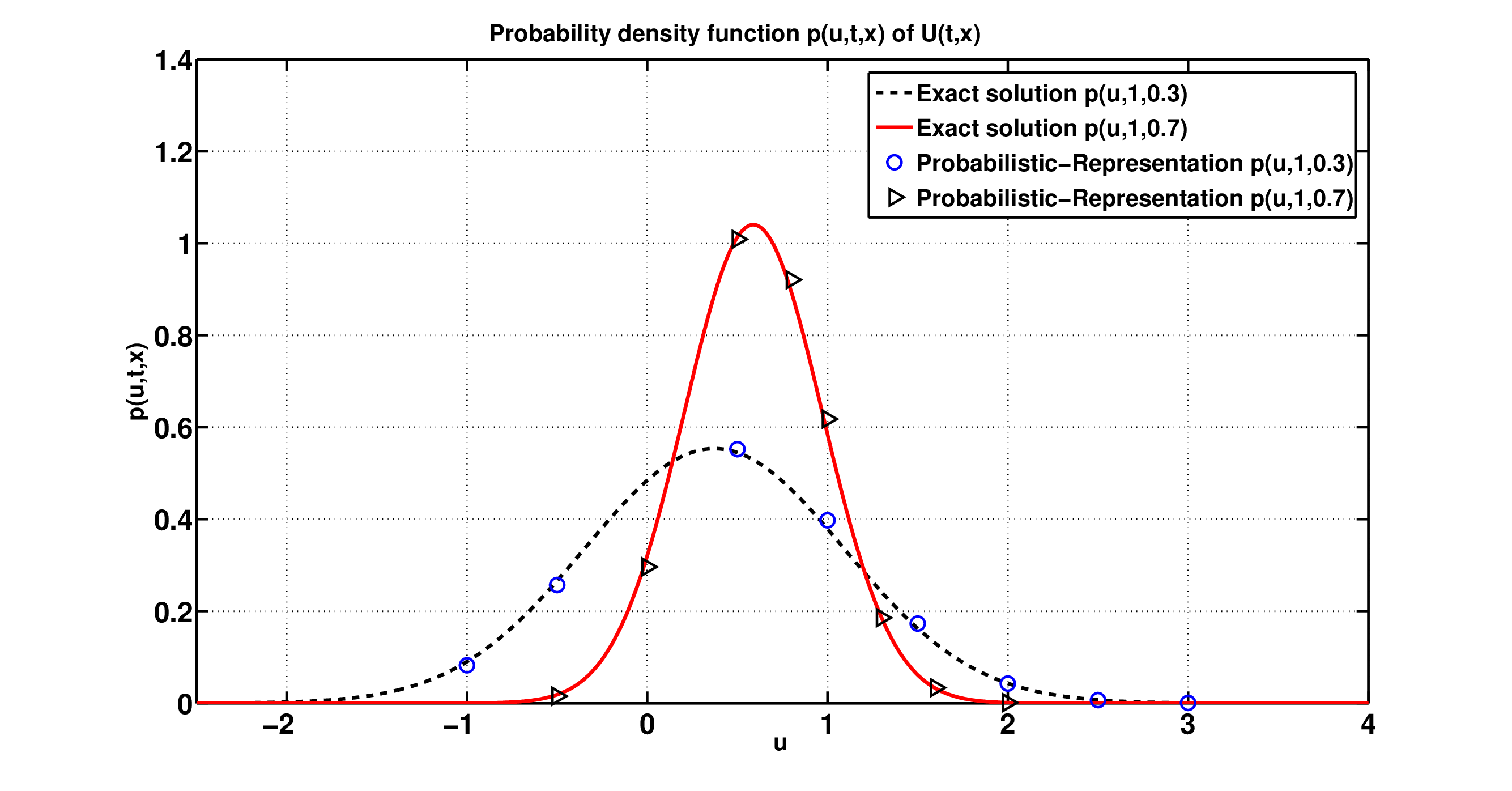}

\caption{Simulation of the PDF $p(u, 1,x)$ of (\ref{1.1}) with the step size $\triangle t=10^{-2}$, $T=2$, and different $u$ and $x$.}
\label{1}
\end{figure}

\section{Stochastic heat equation with a multiplicative $Q$-Wiener noise under homogeneous boundary conditions}\label{sec3}

In order to derive the PDF of the system (\ref{1.2}), denote
\begin{align}\label{3.1}
 U(t,x)=\sum_{n=1}^{\infty}U_n(t)\tilde{e}_n(x)=:\sum_{n=1}^{\infty}U_n(t,x),
\end{align}
where $\tilde{e}_n(x)=\sqrt{2}\sin(n\pi x)$, $n=1,2,\ldots.$ Let $m\in \mathbb{N}$ be given in (\ref{1.2}), we set
\begin{eqnarray*}\label{3.2}
\left\{\begin{array}{ll}
\mathbf{{\bar{D}}}=\{(u,x)|u\in\mathbb{R},x\in[0,1]\},
\quad \mathbf{\Gamma}=\left\{(u,x)|u\in\mathbb{R},x\in\{0,\frac{1}{m},\ldots,\frac{m-1}{m},1\}\right\} \\ [0.2cm]
 \mathbf{{D}}=
 \mathop{\cup}\limits_{k=0}^{\lfloor(m-1)/2\rfloor}
[\mathbf{{D}}_{1,m}^k\cup\mathbf{{D}}_{2,m}^k], \quad \mathbf{{D}}_{1,m}^k=\{(u,x)|u\in
        (0,\infty),$ $x\in\left(\frac{2k}{m}, \frac{2k+1}{m}\right)\}, \\ [0.2cm] \mathbf{{D}}_{2,m}^k=\{(u,x)|u\in
        (-\infty, 0),~x\in\left(\frac{2k+1}{m}, \frac{2k+2}{m}\right) \cap (0, 1)\},
\end{array}\right.
\end{eqnarray*}
where the floor function $\lfloor x\rfloor$, maps a real number $x$ to the greatest integer less than or equal to $x$.

Analogous to Lemma \ref{le.2.1}, inserting (\ref{3.1}) into (\ref{1.2}),  with $\lambda_n=c-[a^2(n\pi)^2+b(n\pi)^{\alpha}]$,  we obtain
\begin{align*}
\mathrm{d}{U}_n(t)=\lambda_nU_n(t)\mathrm{d}t+\varepsilon q_nU_n(t)\mathrm{d}{W}_n(t),\quad  n=1,2,\ldots,
\end{align*}
with the initial value $U_n(0)=\langle U(0,\cdot),\tilde{e}_n(\cdot)\rangle$. Note that $U_n(0)=U_m(0)$ if $n=m$ and $U_n(0)=0$ if $n\neq m$. Therefore,
\begin{align}\label{3.2}
 {U}_n(t)=\exp\left\{b_nt+\varepsilon q_n{W}_n(t)\right\}U_n(0)=\left\{
         \begin{array}{ll}
\exp\{b_m t+\varepsilon q_m{W}_m(t)\}U_m(0), & n=m, \\ [0.2cm]
           0, & n\neq m,
         \end{array}
       \right.
\end{align}
where $b_n=\lambda_n-(\varepsilon q_n)^2/2$. In view of (\ref{3.1}) and (\ref{3.2}), it follows that
 \begin{align}\label{3.3}
 U(t,x)=U_m(t)\tilde{e}_m(x)=U_m(t,x).
 \end{align}

In the rest of this paper, let $\delta_0(u)$ denote the Dirac delta distribution centered at $0$ where $u\in\mathbb{R}$. Without loss of generality, we make the following assumption.

\smallskip
($\mathbf{A2}$) $U_m(0)$ obeys the logarithmic normal distribution with the density given by
\begin{align}\label{3.4}
p(u,0)=\left\{
         \begin{array}{ll}
           \frac{1}{u\sqrt{2\pi\nu_m(0)}}\exp\{-\frac{(\ln u-{\mu}_m(0))^2}{2\nu_m(0)}\}, & u>0, \\ [0.2cm]
         0, & \hbox{otherwise,}\\
         \end{array}
       \right.
\end{align}
where ${\mu}_m(0)=\mathbb{E}[\ln U_m(0)]\in\mathbb{R}$ and $\nu_m(0)={Var}[\ln U_m(0)]\in(0,\infty)$.

\begin{remark}
\rm
Note that $\tilde{e}_m(x)=\sqrt{2}\sin(m\pi x)$, $U_m(0)>0$ and $U( 0,x)=U_m(0)\tilde{e}_m(x)$. It follows that $U(0,x)=0$ if $x\in\{0,1/m,\ldots,(m-1)/m,1\}$, $U(0,x)>0$ if $x\in(2k/m,(2k+1)/m)$ and $U( 0,x)<0$ if $x\in((2k+1)/m,(2k+2)/m)\cap(0,1)$, where $k\in\{0,1,\ldots,\lfloor(m-1)/2\rfloor\}$. Therefore, if $(u,x)\in\mathbf{\Gamma}$, $U(0,x)$ obeys the Dirac delta distribution $\delta_0(u)$, $u\in\mathbb{R}$, which is independent of the choice of $U_m(0)$. Moreover, in view of $Var(\ln |U(0,x)|)={Var}[\ln U_m(0)]$ and the density function of $U_m(0)$ satisfying (\ref{3.4}), the density of $U( 0,x)$ satisfies
\begin{align}\label{3.5}
p(u,0,x)=\left\{
         \begin{array}{ll}
           \frac{1}{u\sqrt{2\pi\nu(0)}}\exp\{-\frac{(\ln u-{\mu}(0,x))^2}{2\nu(0)}\}, & (u,x)\in\mathbf{{D}}_{1,m}^k, \\ [0.2cm]
            \frac{1}{|u|\sqrt{2\pi\nu(0,x)}}\exp\{-\frac{(\ln |u|-{\mu}(0))^2}{2\nu(0)}\}, & (u,x)\in\mathbf{{D}}_{2,m}^k, \\ [0.2cm]
         0, & \mathbf{{\bar{D}}}\backslash(\mathbf{D}\cup\mathbf{\Gamma}),
         \end{array}
       \right.
\end{align}
where ${\mu}(0,x)=\mathbb{E}[\ln U_m(0)]+\ln |\tilde{e}_m(x)|$, $\nu(0)={Var}[\ln U_m(0)]$ .
\end{remark}
From the above remark, we know that the PDF $p(u,0,x)$ of the random initial value $U( 0,x)$ for $(\ref{1.2})$ depends on the spatial position $x$.
\begin{theorem}\label{th.3.2}
Let $(\mathbf{A2})$ hold and set $\varepsilon_m=\varepsilon q_m$,
\begin{align}\label{3.6}
 {\mu}(t,x)=\mathbb{E}[\ln U_m(0)]+b_mt+\ln |\tilde{e}_m(x)| \ {\rm and} \ \nu(t)={Var}[\ln U_m(0)]+\varepsilon_m^2 t.
\end{align}
Then the solution $U(t,x)$ for $(\ref{1.2})$ follows the Dirac delta distribution $\delta_0(u)$ if $(u,x)\in\mathbf{\Gamma}$, while it follows the logarithmic normal distribution with density given by
\begin{align}\label{3.7}
p(u,t,x)=\left\{
         \begin{array}{ll}
           \frac{1}{u\sqrt{2\pi\nu(t)}}\exp\{-\frac{(\ln u-{\mu}(t,x))^2}{2\nu(t)}\}, & (u,x)\in\mathbf{{D}}_{1,m}^k, \\ [0.2cm]
            \frac{1}{|u|\sqrt{2\pi\nu(t)}}\exp\{-\frac{(\ln |u|-{\mu}(t,x))^2}{2\nu(t)}\}, & (u,x)\in\mathbf{{D}}_{2,m}^k,\\ [0.2cm]
         0, & \mathbf{{\bar{D}}}\backslash(\mathbf{D}\cup\mathbf{\Gamma}),
         \end{array}
       \right.
\end{align}
where $k\in\{0,1,\ldots,\lfloor(m-1)/2\rfloor\}$. The density given by $(\ref{3.7})$ also satisfies
\begin{align}\label{3.8}
\lim\limits_{x\rightarrow\frac{k}{m}}p(u,t,x)=\delta_0(u), \quad u\in \mathbb{R},~t>0,~k=1,2,\ldots,m-1.
\end{align}
Moreover, if $(u,x)\in\mathbf{{D}}$, then $p(u,t,x)$ satisfies the following Fokker--Planck equation
\begin{align}\label{3.9}
\frac{\partial}{\partial t}p(u,t,x)=&\frac{\varepsilon_m^2}{2}u^2\frac{\partial^2}{\partial u^2}p(u,t ,x)+\left(\frac{3\varepsilon_m^2-b_m}{2}\right)u\frac{\partial}{\partial u}p(u,t ,x)+\left(\frac{\varepsilon_m^2-b_m}{2}\right) p(u,t ,x),
\end{align}
with the initial value $p(u,0,x)$ given in $(\ref{3.5})$. Furthermore, the following Chapman--Kolmogorov equation holds
\begin{align}\label{3.10}
\begin{array}{ll}
 p(u,t,x|w,s,x)=\int_{\mathbb{R}}p(u,t,x |v,r,x)p(v,r,x |w,s,x)dv
\end{array}
\end{align}
for $0<s<r<t$ and $x\in(0,1)\backslash\{0,1/m,\ldots,(m-1)/m,1\}$.
\end{theorem}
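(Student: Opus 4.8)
The plan is to base the entire argument on the closed-form representation already established in (\ref{3.2})--(\ref{3.3}), namely $U(t,x)=U_m(t)\tilde e_m(x)$ with $U_m(t)=\exp\{b_mt+\varepsilon_m W_m(t)\}U_m(0)$ and $\tilde e_m(x)=\sqrt 2\sin(m\pi x)$. With this in hand the $\mathbf{\Gamma}$-case is immediate: when $x\in\{0,1/m,\ldots,(m-1)/m,1\}$ one has $\sin(m\pi x)=0$, so $\tilde e_m(x)=0$ and $U(t,x)\equiv 0$ for every $t$ and every realization; hence the law of $U(t,x)$ is the point mass $\delta_0(u)$, independent of the choice of $U_m(0)$ and of $t$.

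For $x\notin\{0,1/m,\ldots,1\}$ I would pass to logarithms. Writing $\ln|U(t,x)|=\ln U_m(0)+b_mt+\varepsilon_m W_m(t)+\ln|\tilde e_m(x)|$, the two random summands $\ln U_m(0)\sim N(\mu_m(0),\nu_m(0))$ (by $(\mathbf{A2})$) and $\varepsilon_m W_m(t)\sim N(0,\varepsilon_m^2 t)$ are independent Gaussians, so $\ln|U(t,x)|\sim N(\mu(t,x),\nu(t))$ with $\mu,\nu$ exactly as in (\ref{3.6}). Since $U_m(0)>0$ and the exponential factor is positive, the sign of $U(t,x)$ coincides with that of $\tilde e_m(x)$, which is $+$ on each interval $(2k/m,(2k+1)/m)$ and $-$ on each $((2k+1)/m,(2k+2)/m)\cap(0,1)$. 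A change of variables from $\ln|U(t,x)|$ to $U(t,x)$ then delivers the log-normal density (\ref{3.7}) on $\mathbf{D}_{1,m}^k$ (where $u>0$) and its reflected version on $\mathbf{D}_{2,m}^k$ (where $u<0$, so $\ln|u|$ enters), while on $\mathbf{\bar{D}}\setminus(\mathbf{D}\cup\mathbf{\Gamma})$ the sign of $u$ is incompatible with that of $\tilde e_m(x)$ and the density is zero.

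The limit (\ref{3.8}) I would obtain most cleanly from the representation itself: for fixed $t$, as $x\to k/m$ one has $\tilde e_m(x)\to 0$, hence $U(t,x)=U_m(t)\tilde e_m(x)\to 0$ almost surely, so the laws converge weakly to $\delta_0$ (equivalently, the location parameter $\mu(t,x)=\mu_m(0)+b_mt+\ln|\tilde e_m(x)|\to-\infty$ with $\nu(t)$ fixed, which concentrates the log-normal at the origin). For the Fokker--Planck equation (\ref{3.9}), the efficient route is to note that, for fixed $x$, the representation gives the linear It\^o SDE $dU(t,x)=\lambda_m U(t,x)\,dt+\varepsilon_m U(t,x)\,dW_m(t)$, whose forward Kolmogorov operator is $p\mapsto \frac{\varepsilon_m^2}{2}\frac{\partial^2}{\partial u^2}(u^2 p)-\lambda_m\frac{\partial}{\partial u}(u p)$; expanding these derivatives and re-expressing the coefficients through $\lambda_m=b_m+\varepsilon_m^2/2$ yields (\ref{3.9}). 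Alternatively one verifies (\ref{3.9}) directly by differentiating (\ref{3.7}) and using $\partial_t\mu=b_m$, $\partial_t\nu=\varepsilon_m^2$.

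Finally, the Chapman--Kolmogorov identity (\ref{3.10}) follows from the Markov property: $U(t,x)$ is a fixed scaling of the one-dimensional diffusion $U_m(t)$, which is Markov by the SDE above, so its transition densities compose as asserted for $x\in(0,1)\setminus\{0,1/m,\ldots,(m-1)/m,1\}$. Given the explicit representation, no single computation is genuinely hard; the main obstacle I anticipate is the careful bookkeeping of signs and regions in the second step --- placing the density on exactly $\mathbf{D}_{1,m}^k\cup\mathbf{D}_{2,m}^k$ and showing it vanishes elsewhere --- together with treating (\ref{3.8}) as a rigorous statement of weak convergence to $\delta_0$ rather than a merely formal pointwise limit of the density formula.
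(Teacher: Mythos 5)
Your proposal is correct and follows essentially the same route as the paper: read everything off the closed form $U(t,x)=U_m(t)\tilde e_m(x)$, get the Dirac mass on $\mathbf{\Gamma}$ from $\tilde e_m(k/m)=0$, pass to $\ln|U(t,x)|$ to identify the Gaussian law $N(\mu(t,x),\nu(t))$ and hence the signed log-normal density on $\mathbf{D}_{1,m}^k\cup\mathbf{D}_{2,m}^k$, and invoke the Markov property of the scalar diffusion $U_m$ for the Chapman--Kolmogorov identity. For (\ref{3.8}) the paper tests the density against $\varphi\in C_c^\infty(\mathbb{R})$ and applies dominated convergence; your almost-sure convergence $U(t,x)\to 0$ (equivalently $\mu(t,x)\to-\infty$ with $\nu(t)$ fixed) gives the same weak convergence more directly and is a legitimate shortcut.

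One concrete caveat on your treatment of (\ref{3.9}): the paper verifies it by brute-force differentiation of the density (its display (\ref{3.12})), whereas your primary route expands the forward Kolmogorov operator of $dU=\lambda_m U\,dt+\varepsilon_m U\,dW_m$. Carrying that expansion out with $\lambda_m=b_m+\varepsilon_m^2/2$ gives the coefficients $\tfrac{3\varepsilon_m^2}{2}-b_m=\tfrac{3\varepsilon_m^2-2b_m}{2}$ for $u\,\partial_u p$ and $\tfrac{\varepsilon_m^2}{2}-b_m=\tfrac{\varepsilon_m^2-2b_m}{2}$ for $p$, not the printed $\tfrac{3\varepsilon_m^2-b_m}{2}$ and $\tfrac{\varepsilon_m^2-b_m}{2}$; a direct check of (\ref{3.12}) against (\ref{3.9}) confirms the same discrepancy, and the drift and potential used in Theorem \ref{th.3.5} agree with your version. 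So your computation does not literally ``yield (\ref{3.9})'' as displayed --- it yields the corrected equation, and you should flag that the printed coefficients appear to contain a typographical slip rather than silently asserting agreement.
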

\begin{proof}
From (\ref{3.3}), we deduce that $U(t,x)=0$ for $x\in\{0,1/m,\ldots,(m-1)/m,1\}$. Therefore, $U(t,x)$ obeys the Dirac delta distribution $\delta_0(u)$ if $(u,x)\in\mathbf{\Gamma}$. When $(u,x)\in\mathbf{{\bar{D}}}\backslash\mathbf{\Gamma}$, we set $\hat{Y}_m(t,x)=\ln |U_m(t,x)|$. Thanks to $U_m(t,x)=U_m(t)\tilde{e}_m(x)$, $U_m(0)>0$ and (\ref{3.2}), $\hat{Y}_m(t,x)=\ln U_m(t)+\ln |\tilde{e}_m(x)|$. For every $t$, the random variable $\hat{Y}_m(t,x)\sim N(\mu(t,x),\nu(t))$ with $\mu(t,x)$ and $\nu(t)$ given by (\ref{3.6}). Note that $U(t,x)=U_m(t,x)$, the random variable $U(t,x)$ follows the logarithmic normal distribution, and the corresponding PDF $p(u,t,x)$ satisfies
\begin{align}\label{3.11}
p(u,t,x)=\left\{
         \begin{array}{ll}
           \frac{1}{u\sqrt{2\pi\nu(t)}}\exp\{-\frac{(\ln u-{\mu}(t,x))^2}{2\nu(t)}\}, & u>0,x\in(\frac{2k}{m},\frac{2k+1}{m}), \\ [0.2cm]
            0, & u\leq0,x\in(\frac{2k}{m},\frac{2k+1}{m}),\\ [0.2cm]
            \frac{1}{|u|\sqrt{2\pi\nu(t)}}\exp\{-\frac{(\ln |u|-{\mu}(t,x))^2}{2\nu(t)}\}, & u<0,x\in(\frac{2k+1}{m},\frac{2k+2}{m})\cap(0,1), \\ [0.2cm]
            0, & u\geq 0, x\in\left(\frac{2k+1}{m},\frac{2k+2}{m}\right)\cap(0,1)
         \end{array}
       \right.
\end{align}
with $k\in\{0,1,\ldots,\lfloor(m-1)/2\rfloor\}$. And the assertion (\ref{3.7}) follows  immediately. It remains to prove (\ref{3.8}) and we divide the proof into two cases.

$\mathbf{Case~1}$: $x\in(2k/m,(2k+1)/m)$, where $k\in\{0,1,\ldots,\lfloor(m-1)/2\rfloor\}$. Let $C_c^\infty(\mathbb{R})$ be the set of compactly supported infinitely differentiable functions. Take $\varphi\in C_c^\infty(\mathbb{R})$, we have
\begin{align*}
&\left\langle p(\cdot,t,x),\varphi(\cdot)\right\rangle=\int_0^\infty\frac{1}{u\sqrt{2\pi\nu(t)}}\exp\left\{-\frac{(\ln u-{\mu}(t,x))^2}{2\nu(t)}\right\}\varphi(u)du\\
&~~~~~~~~~~~~~~~~~~~=\frac{1}{\sqrt{2\pi\nu(t)}}\int_{-\infty}^\infty\exp\left\{-\frac{( u-{\mu}(t,x))^2}{2\nu(t)}\right\}\varphi(\exp(u))du\\
&~~~~~~~~~~~~~~~~~~~=\frac{1}{\sqrt{2\pi}}\int_{-\infty}^\infty\exp\left\{-\frac{r^2}{2}\right\}\varphi(\exp(\sqrt{\nu(t)}r+\mu(t,x)))dr=:J(t,x).
\end{align*}
By (\ref{3.6}) and the dominated convergence theorem,
\begin{align*}
\lim\limits_{x\rightarrow\frac{2k}{m}}J(t,x)=\frac{1}{\sqrt{2\pi}}\int_{-\infty}^\infty\exp\left\{-\frac{r^2}{2}\right\}dr\varphi(0)=\varphi(0)=\langle\delta_0(\cdot),\varphi(\cdot)\rangle,
\end{align*}
which implies that $\lim_{x\rightarrow\frac{2k}{m}}p(u,t,x)=\delta_0(u)$.

$\mathbf{Case~2}$: $x\in((2k+1)/m,(2k+2)/m)$, where $k\in\{0,1,\ldots,\lfloor(m-1)/2\rfloor\}$. By the similar arguments as in $\mathbf{Case~1}$, we obtain that $\lim_{x\rightarrow\frac{2k+1}{m}}p(u,t,x)=\delta_0(u)$. Therefore, the assertion (\ref{3.8}) holds.

Furthermore, for any $(u,x)\in\mathbf{{D}}$, the PDF $p(u,t ,x)$ satisfies
\begin{align}\label{3.12}
\left\{\begin{array}{ll}
\frac{\partial}{\partial t}p(u,t,x)=\frac{1}{2}\frac{\partial}{\partial t}{\nu}(t)\left[-\frac{1}{\nu(t)}+\frac{(\ln |u|-{\mu}(t,x))^2}{\nu^2(t)}\right]p(u,t ,x)
+\frac{\partial}{\partial t}{\mu}(t,x)\frac{\ln |u|-{\mu}(t,x)}{\nu(t)}p(u,t ,x), \\ [0.2cm]
\frac{\partial}{\partial u}p(u,t ,x)=-\frac{1}{u}\left(1+\frac{\ln |u|-{\mu}(t,x)}{\nu(t)}\right) p(u,t ,x),\\ [0.2cm]
\frac{\partial^2}{\partial u\partial u}p(u,t,x)=\frac{1}{u^2}\left\{\left[-3u\frac{\partial}{\partial u}p(u,t,x)-p(u,t ,x)\right]+\left[\frac{(\ln |u|-{\mu}(t,x))^2}{\nu^2(t)}-\frac{1}{\nu(t)}\right]p(u,t ,x)\right\}.
\end{array}\right.
\end{align}
Combining (\ref{3.6}) and (\ref{3.12}), the assertion (\ref{3.9}) holds. The last assertion (\ref{3.10}) can be obtained by the similar arguments as in Theorem \ref{th.2.4}.
\end{proof}
\begin{remark}
\rm
From (\ref{3.6}) and (\ref{3.7}), $U(t,x)$ obeys a logarithmic normal distribution with spatiotemporally dependent mean and purely time-dependent variance if $x\in(0,1)\backslash\big\{0,1/m,\ldots,(m-1)/m,1\}$. In addition, as $x$ approaches $k/m$, $U(t,x)$ tends to $0$ for any $t>0$. Furthermore, it follows from (\ref{3.8}) that when $x$ approaches $k/m$, the distribution of $U(t,x)$ transitions from the log-normal to the Dirac delta distribution that is independent of time $t$.
\end{remark}
\begin{remark}
\rm
Compared to the PDF of an SDE, the evolution of the PDF for an SPDE exhibits greater complexity. Theorem \ref{th.3.2} shows that the PDF of the solution $U(t,x)$ to (\ref{1.2}) critically depends on the spatial variable $x$. When $x\in\{0,1/m,\ldots,(m-1)/m,1\}$, $U$ follows a Dirac delta distribution, and when $x\in(0,1)\backslash\big\{0,1/m,\ldots,(m-1)/m,1\}$, $U$ obeys a logarithmic normal distribution. Moreover, as $x$ approaches $k/m$, the distribution of $U$ transitions from the log-normal to the Dirac delta distribution.
\end{remark}

Thanks to the Feynman--Kac formula \cite[Theorem 8.2.1]{BO}, we can establish the probability representation of the density function for the stochastic system (\ref{1.2}).
\begin{theorem}\label{th.3.5}
Suppose that $(u,x)\in\mathbf{{D}}$. Let $\hat{U}(t,x)$ be an It\^{o} process driven by
\begin{align}\label{3.13}
 d{\hat{U}( s,x)}=\left(\frac{3\varepsilon_m^2}{2}-b_m\right){\hat{U}( s,x)}{ds}+\varepsilon_m{\hat{U}( s,x)}dB(s),
  \end{align}
with the initial value $\hat{U}( 0,x)=u$, where $\varepsilon_m$ and $b_m$ are given in Theorem \ref{th.3.2}. Then the solution of the Fokker--Planck equation $(\ref{3.9})$ with the initial value $p(u,0 ,x)$ satisfying $(\ref{3.5})$ admits the probabilistic representation
\begin{align}\label{3.14}
p(u, t,x)=\exp\left\{\left(\frac{\varepsilon_m^2}{2}-b_m\right)t\right\}
\mathbb{E}\left[ p(\hat{U}(t,x), 0,x)|{\hat{U}( 0,x)=u}\right],\quad  t>0,~(u,x)\in\mathbf{{D}}.
\end{align}
\end{theorem}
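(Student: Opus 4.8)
The plan is to read the Fokker--Planck equation (\ref{3.9}) as an evolution equation of the form $\frac{\partial}{\partial t}p=\mathcal{A}p+\kappa p$, in which $\mathcal{A}$ is the infinitesimal generator of the diffusion (\ref{3.13}) and $\kappa$ is a constant potential, and then to identify $p$ with the associated Feynman--Kac expectation. First I would solve the linear SDE (\ref{3.13}) explicitly: it is a geometric Brownian motion, so $\hat{U}(s,x)=u\exp\{(\varepsilon_m^2-b_m)s+\varepsilon_m B(s)\}$ with $\hat{U}(0,x)=u$, and its generator is $\mathcal{A}g(u)=\frac{\varepsilon_m^2}{2}u^2g''(u)+(\frac{3\varepsilon_m^2}{2}-b_m)ug'(u)$. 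The structural fact that drives the proof is that the right-hand side of (\ref{3.9}) takes the form $\mathcal{A}p+\kappa p$ with the constant $\kappa=\frac{\varepsilon_m^2}{2}-b_m$. I would also record that, because $\hat{U}$ is a geometric Brownian motion, it preserves the sign of $u$: for $(u,x)\in\mathbf{D}_{1,m}^k$ the path stays in $(0,\infty)$ and for $(u,x)\in\mathbf{D}_{2,m}^k$ it stays in $(-\infty,0)$, so the terminal value $p(\hat{U}(t,x),0,x)$ always lands on the correct branch of the initial density (\ref{3.5}); the two regions are then treated symmetrically.

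Next I would run an It\^o--Dynkin argument parallel to the proof of Theorem \ref{th.2.7}. Fixing $t>0$, apply It\^o's formula to $Z_s:=\exp(\kappa s)\,p(\hat{U}(s,x),t-s,x)$ on $s\in[0,t]$. Its $\mathrm{d}s$-coefficient equals $\exp(\kappa s)\big[\kappa p-\frac{\partial p}{\partial t}+\mathcal{A}p\big]$ evaluated at $(\hat{U}(s,x),t-s,x)$, and this bracket vanishes identically because $p$ solves (\ref{3.9}); hence $Z_s$ is a local martingale whose diffusion part is $\exp(\kappa s)\,\varepsilon_m\hat{U}(s,x)\,\partial_u p(\hat{U}(s,x),t-s,x)\,\mathrm{d}B(s)$. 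Stopping at $\tau_R=\inf\{s>0:|\hat{U}(s,x)|>R\}$ and applying Dynkin's formula gives $\mathbb{E}[Z_{t\wedge\tau_R}\mid\hat{U}(0,x)=u]=Z_0=p(u,t,x)$; letting $R\to\infty$ and using $Z_t=\exp(\kappa t)\,p(\hat{U}(t,x),0,x)$ yields the representation (\ref{3.14}). In order to speak legitimately of \emph{the} solution of (\ref{3.9}), I would accompany this with a uniqueness statement obtained as in Theorem 3.8 of \cite{Meng}, exactly as was done for the backward equation (\ref{2.15}).

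The main obstacle is the limit $R\to\infty$, i.e.\ upgrading $Z$ from a local to a genuine martingale and justifying $\mathbb{E}[Z_{t\wedge\tau_R}\mid\hat{U}(0,x)=u]\to\mathbb{E}[Z_t\mid\hat{U}(0,x)=u]$. This is genuinely delicate: the initial density $p(\cdot,0,x)$ in (\ref{3.5}) is log-normal, hence unbounded near the origin through the factor $1/|w|$ and heavy-tailed, while $\hat{U}(t,x)$ is itself log-normally distributed, so neither the terminal function nor the coefficients satisfy the compact-support and boundedness hypotheses of the textbook Feynman--Kac statement \cite[Theorem 8.2.1]{BO}. I would resolve this by truncating $p(\cdot,0,x)$ and passing to the limit with a uniform-integrability bound for $p(\hat{U}(t,x),0,x)$ together with an $L^2$ estimate for the stochastic integral. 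A cleaner, self-contained alternative that bypasses the martingale limit altogether is a direct verification: substituting $\hat{U}(t,x)=u\exp\{(\varepsilon_m^2-b_m)t+\varepsilon_m B(t)\}$ and the explicit density (\ref{3.5}) into the right-hand side of (\ref{3.14}), the expectation reduces to a Gaussian integral in $B(t)\sim N(0,t)$; completing the square and using $\nu(t)=\nu(0)+\varepsilon_m^2t$ and $\mu(t,x)=\mu(0,x)+b_m t$ from (\ref{3.6}) reproduces the log-normal density (\ref{3.7}) term by term, confirming (\ref{3.14}) on all of $\mathbf{D}$.
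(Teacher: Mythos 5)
Your proof is correct, and it supplies considerably more than the paper does: the paper states Theorem \ref{th.3.5} with no written proof at all beyond the one-line appeal to the Feynman--Kac formula \cite[Theorem 8.2.1]{BO}. Your It\^{o}--Dynkin argument applied to $Z_s=\exp(\kappa s)\,p(\hat{U}(s,x),t-s,x)$ with $\kappa=\varepsilon_m^2/2-b_m$ is the standard proof of that formula and runs exactly parallel to the paper's own proof of Theorem \ref{th.2.7}, so in spirit it is the same route; your remark that the geometric Brownian motion preserves the sign of $u$, so that the terminal value always hits the correct branch of (\ref{3.5}), is a detail the paper never addresses but that is needed for the statement to make sense on $\mathbf{{D}}_{2,m}^k$. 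Two of your additions are genuinely valuable. First, you are right that the hypotheses of the cited Feynman--Kac theorem are not satisfied here: the terminal datum $p(\cdot,0,x)$ in (\ref{3.5}) is unbounded near $u=0$ and is not compactly supported, so the passage from the stopped to the unstopped expectation requires a separate justification; your closing direct verification --- substituting $\hat{U}(t,x)=u\exp\{(\varepsilon_m^2-b_m)t+\varepsilon_m B(t)\}$ into the log-normal density and completing the Gaussian square --- is the cleanest way to settle this, and it does reproduce $\exp\{-(\varepsilon_m^2/2-b_m)t\}\,p(u,t,x)$ exactly, using $\mu(t,x)=\mu(0,x)+b_m t$ and $\nu(t)=\nu(0)+\varepsilon_m^2 t$ from (\ref{3.6}). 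Second, your identification of the right-hand side of (\ref{3.9}) as $\mathcal{A}p+\kappa p$, with $\mathcal{A}$ the generator of (\ref{3.13}), only holds if the first- and zeroth-order coefficients of (\ref{3.9}) are read as $\frac{3\varepsilon_m^2}{2}-b_m$ and $\frac{\varepsilon_m^2}{2}-b_m$ rather than as printed, namely $\frac{3\varepsilon_m^2-b_m}{2}$ and $\frac{\varepsilon_m^2-b_m}{2}$; recomputing (\ref{3.9}) from (\ref{3.12}) confirms that the former are the correct coefficients (and they are the ones that actually appear in (\ref{3.13})--(\ref{3.14})), so your reading is the right one, but you should state explicitly that you are using this corrected form of the equation.
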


\begin{remark}
\rm
For the following Stratonovich SHE with multiplicative $Q$-Wiener noise:
\begin{align}\label{3.15}
\begin{array}{clc}
 \frac{\partial}{\partial t}{U}(t,x)=a^2\frac{\partial^2}{\partial x^2}U(t,x)-b(-\frac{\partial^2}{\partial x^2})^{\frac{\alpha}{2}}U(t,x)+\left(c-\frac{\varepsilon^2}{2}\right)U(t,x)+\varepsilon \circ U(t,x)\frac{\partial}{\partial t}{W}(t,x),
\end{array}
\end{align}
where $W(t,x)$ is a $Q$-Wiener process with $q_m=1$, $t>0$ and $x\in(0,1)$, we can obtain results similar to Theorems \ref{th.3.2} and \ref{th.3.5} because its corresponding It\^{o} SHE is the first equation in (\ref{1.2}) with the same $Q$-Wiener noise. Furthermore, for the stochastic system $(\ref{1.2})$, due to the presence of the fractional-order term $b(-\frac{\partial^2}{\partial x^2})^{\frac{\alpha}{2}}U$, we are unable to analyze the PDF of the system under non-homogeneous boundary conditions. However, when $b=0$, we can still employ the methodology presented in Section $2$ to examine the PDF and the corresponding Fokker--Planck equation of the system $(\ref{1.2})$ under non-homogeneous boundary conditions.
\end{remark}
\begin{remark}
\rm
For the distribution of $\log U_m( 0)$, we can adopt the same distribution as that mentioned in Remark \ref{re.2.6}. And if $x$ belongs to a bounded domain $\Omega\subset\mathbb{R}^d$, the approach presented in this paper still suitable.
\end{remark}

\begin{figure}
\centering%
\includegraphics[height=2.5in,width=5in]{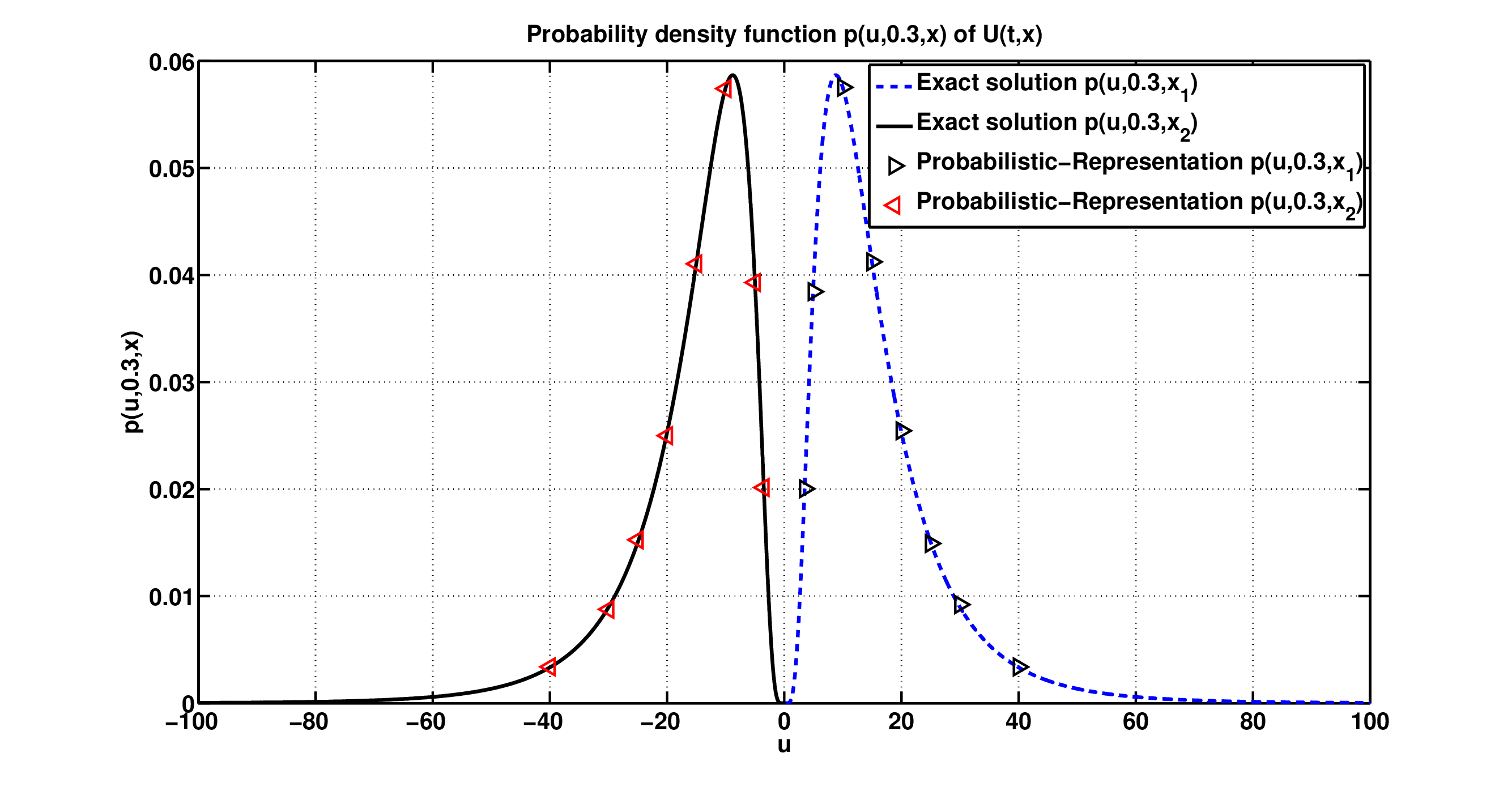}
\caption{Simulation of the PDF $p(u, 0.3,x)$ of (\ref{1.2}) with the step size $\triangle t=10^{-4}$ and different expressions.}
\label{2}
\end{figure}
\begin{figure}
\centering%
\includegraphics[height=2.5in,width=5in]{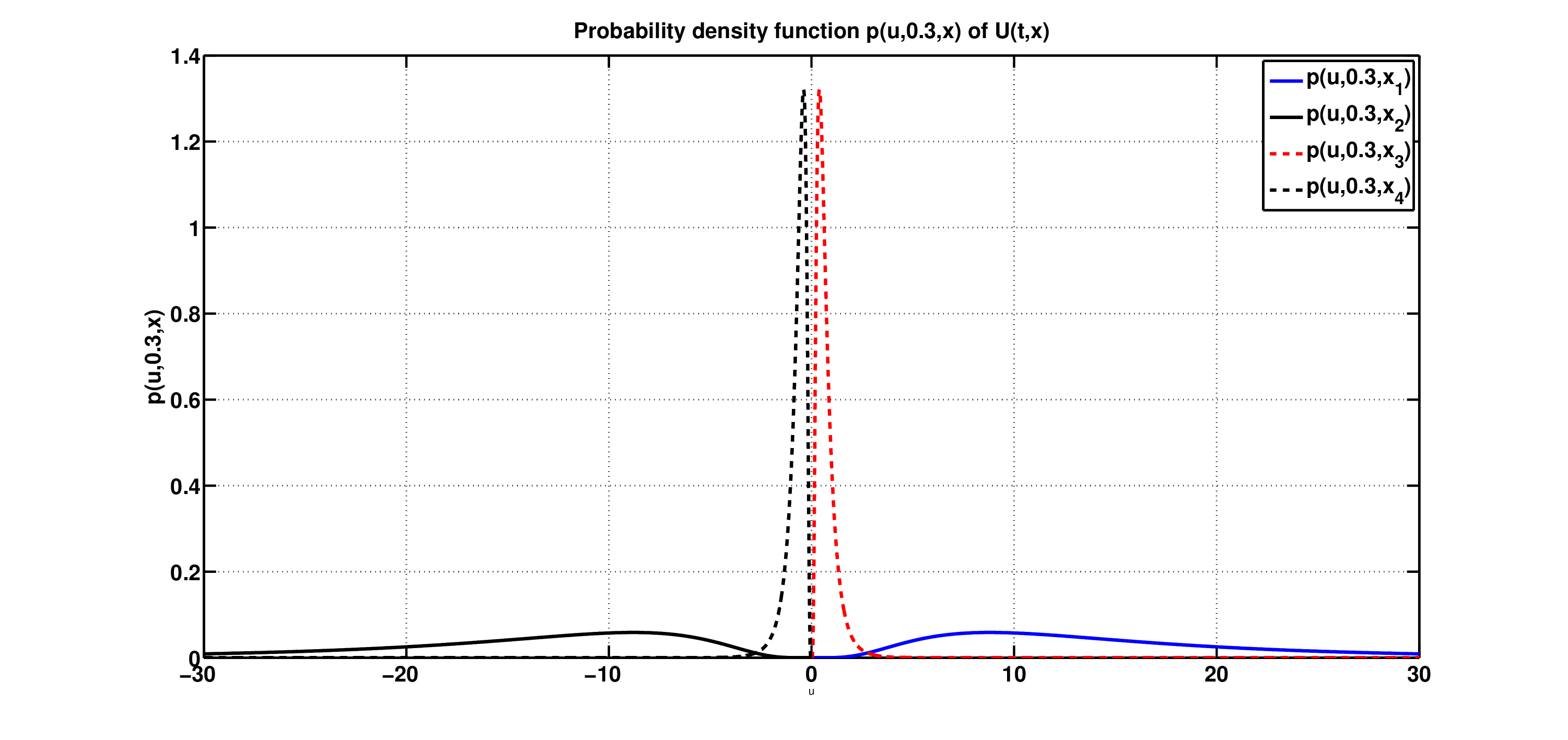}
\caption{Simulation of the PDF $p(u,0.3,x)$ of (\ref{1.2}) with the step size $\triangle t=10^{-4}$ and different $x$.}
\label{3}
\end{figure}
\begin{example}
\rm
Choose $a=1,b=1,\alpha=1/2,\varepsilon={\sqrt{2}}/2,m=2,q_m=1, c={11}/2+\left[\sqrt{2\pi}+(2\pi)^2\right],$ $x_1=1/8$, $x_2=5/8$, $x_3=99/200$ and $x_4=199/200$. Assume that $\ln U_m(0)$ obeys the normal distribution $N\left(1,{1}/4\right)$. In this situation, it follows from $\tilde{e}_m(x)=\sqrt{2}\sin(2\pi x)$ and (\ref{3.7}) that
\begin{align}\label{3.16}
p(u,t,x_i)=\left\{
         \begin{array}{ll}
           \frac{1}{u\sqrt{2\pi\nu(t)}}\exp\{-\frac{(\ln u-{\mu}(t,x_i))^2}{2\nu(t)}\}, & u>0,x_i\in(0,\frac{1}{2}), \\ [0.2cm]
            \frac{1}{|u|\sqrt{2\pi\nu(t)}}\exp\{-\frac{(\ln |u|-{\mu}(t,x_i))^2}{2\nu(t)}\}, & u<0,x_i\in(\frac{1}{2},1),\\ [0.2cm]
            0, & \hbox{otherwise,}
         \end{array}
       \right.
\end{align}
where ${\mu}(t,x_i)=(4+21t)/4+\ln |\sqrt{2}\sin(2\pi x_i)|$ and $\nu(t)=(1+2t)/4$, $i=1,2,3,4$.

In view of (\ref{3.14}), $p(u, t,x_i)$ also admits the following probabilistic representation
\begin{align}\label{3.17}
p(u, t,x_i)=\exp(-5t)\mathbb{E}\left[ p\left(\hat{U}\left( t,x_i\right), 0,x_i \right)\Big|{\hat{U}( 0,x_i)=u}\right],~~t>0,~(u,x_i)\in\mathbf{{D}}_{1,2}^0\cup\mathbf{{D}}_{2,2}^0,
\end{align}
where $p(u, 0,x_i)$ is given in (\ref{3.5}) and $\hat{U}\left( t,x_i\right)$ is an It\^{o} process driven by
\begin{align*}
 d{\hat{U}( t,x_i)}=-\frac{9}{2}{\hat{U}( t,x_i)}{dt}+\frac{\sqrt{2}}{2} {\hat{U}( t,x_i)}dB(t),
  \end{align*}
with the initial value $\hat{U}( 0,x_i)=u$ and $i=1,2$.

The computer simulations in Figure \ref{2} show the PDF $p(u, 0.3,x)$ of (\ref{1.2}) with diverse expressions (\ref{3.16})-(\ref{3.17}) and different spatial positions $x$. And from Figure \ref{3}, we find that as $x$ approaches $\frac{1}{2}$ or $1$, the distribution of $U$ transitions from the log-normal to the Dirac delta distribution.
\end{example}

\section{Kadar--Parisi--Zhang equation with an additive $Q$-Wiener noise}\label{sec4}

For the KPZ equation (\ref{1.3}), we first present the following result on the PDF and Fokker--Planck equation.

\begin{theorem}\label{th.4.1}
 Suppose that $(\mathbf{A2})$ holds. Let
 \begin{align}\label{4.1}
 {\tilde{\mu}}(t,x)=\frac{2\theta}{\xi}[\mathbb{E}[\ln U_m(0)]+\tilde{b}_mt+\ln |\tilde{e}_m(x)|]~\mathrm{and}~ \tilde{\nu}(t)=\frac{4\theta^2}{\xi^2}\left[{Var}[\ln U_m(0)]+(\varepsilon q_m)^2 t\right]
 \end{align}
 where $t\geq0$, $x\in(\underline{x},\overline{x})$ and $\tilde{b}_m=-\theta(m\pi)^2-(\varepsilon q_m)^2/2$. Then the PDF of the KPZ equation $(\ref{1.3})$ is
\begin{align}\label{4.2}
p(\kappa,t,x)=\frac{1}{\sqrt{2\pi\tilde{\nu}(t)}}\exp\left\{-\frac{(\kappa-{\tilde{\mu}}
(t,x))^2}{2\tilde{\nu}(t)}\right\}, \quad t\geq0,
\end{align}
and it satisfies the following Fokker--Planck equation
\begin{align}\label{4.3}
  \left\{
\begin{array}{ll}
\frac{\partial}{\partial t}p(\kappa,t,x)=-\frac{\partial{\tilde{\mu}}(t,x)}{\partial t}\frac{\partial}{\partial \kappa}p(\kappa,t,x)+\frac{1}{2}\frac{\partial\tilde{\nu}(t)}{\partial t}\frac{\partial^2}{\partial \kappa^2}p(\kappa,t,x),\\ [0.2cm]
p(\kappa,0,x)=\frac{1}{\sqrt{2\pi\tilde{\nu}(0)}}\exp\left\{-\frac{(\kappa-{\tilde{\mu}}(0,x))^2}{2\tilde{\nu}(0)}\right\}.
\end{array}
\right.
\end{align}
And the following Chapman--Kolmogorov equation holds
\begin{align}\label{4.4}
 p(\kappa,t,x|w,s,x)=\int_{\mathbb{R}}p(\kappa,t,x |v,r,x)p(v,r,x |w,s,x)dv ,\quad  0<s<r<t.
\end{align}
\end{theorem}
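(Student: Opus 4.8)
The plan is to exploit the Cole--Hopf transformation $K(t,x)=\frac{2\theta}{\xi}\ln|U(t,x)|$ linking the KPZ equation (\ref{1.3}) to the SHE (\ref{1.2}) with $a^2=\theta$ and $b=c=0$, and then to read off the law of $K$ from the law of $\ln|U|$ already obtained in Theorem \ref{th.3.2}. First I would invoke (\ref{3.3}), so that $U(t,x)=U_m(t)\tilde{e}_m(x)$; since $x\in(\underline{x},\overline{x})\subset(k/m,(k+1)/m)$, the factor $\tilde{e}_m(x)=\sqrt{2}\sin(m\pi x)$ keeps a constant sign on this interval, whence $\ln|U(t,x)|=\ln U_m(t)+\ln|\tilde{e}_m(x)|$ is exactly the random variable $\hat{Y}_m(t,x)$ used in the proof of Theorem \ref{th.3.2}. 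Specializing the coefficients to $a^2=\theta$, $b=c=0$ gives $\lambda_m=-\theta(m\pi)^2$ and hence $b_m=\lambda_m-(\varepsilon q_m)^2/2=\tilde{b}_m$, so that $\hat{Y}_m(t,x)\sim N\big(\mathbb{E}[\ln U_m(0)]+\tilde{b}_mt+\ln|\tilde{e}_m(x)|,\ {Var}[\ln U_m(0)]+(\varepsilon q_m)^2t\big)$ for every $t$.

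Next I would use that $K(t,x)=\frac{2\theta}{\xi}\hat{Y}_m(t,x)$ is a deterministic affine rescaling of a Gaussian variable, and that multiplying an $N(\mu,\nu)$ variable by the constant $\frac{2\theta}{\xi}$ produces an $N\big(\frac{2\theta}{\xi}\mu,\frac{4\theta^2}{\xi^2}\nu\big)$ variable. This yields $K(t,x)\sim N(\tilde{\mu}(t,x),\tilde{\nu}(t))$ with $\tilde{\mu}$ and $\tilde{\nu}$ exactly as in (\ref{4.1}), and therefore the density (\ref{4.2}). Alternatively, the characteristic-function argument of Theorem \ref{th.2.4} applies once the law of $\hat{Y}_m$ is fixed.

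For the Fokker--Planck equation (\ref{4.3}) I would differentiate the Gaussian density (\ref{4.2}) directly: computing $\partial_t p$, $\partial_\kappa p$ and $\partial_\kappa^2 p$ from the explicit form and matching coefficients reproduces the drift $-\partial_t\tilde{\mu}(t,x)$ and the diffusion $\frac{1}{2}\partial_t\tilde{\nu}(t)$, exactly as in the passage from (\ref{2.12}) to (\ref{2.13}) in Theorem \ref{th.2.4}. The Chapman--Kolmogorov relation (\ref{4.4}) then follows from the Markov property of $K(t,x)$, inherited from that of $U_m(t)$ established in Theorem \ref{th.2.4} via the independence of the driving Brownian motions.

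The computation itself is routine; the only point requiring care — and hence the main obstacle — is the reduction of $\ln|U(t,x)|$ to the affine form $\ln U_m(t)+\ln|\tilde{e}_m(x)|$. This is legitimate precisely because (\ref{1.3}) is posed on a subinterval $(\underline{x},\overline{x})$ lying inside a single lobe $(k/m,(k+1)/m)$ of $\sin(m\pi x)$, on which $\tilde{e}_m(x)$ neither vanishes nor changes sign, so the absolute value contributes only the $t$-independent shift $\ln|\tilde{e}_m(x)|$ rather than a singular or sign-dependent term. Outside such a lobe the transformation would break down at the nodes $x=k/m$, which is exactly why (\ref{1.3}) is restricted to $(\underline{x},\overline{x})$.
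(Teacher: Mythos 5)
Your proposal is correct and follows essentially the same route as the paper: specialize the SHE (\ref{1.2}) to $a^2=\theta$, $b=c=0$ so that $U_m(t)=U_m(0)\exp\{\tilde{b}_mt+\varepsilon q_m W_m(t)\}$, apply the Cole--Hopf transformation to get $K(t,x)=\frac{2\theta}{\xi}[\ln U_m(t)+\ln|\tilde{e}_m(x)|]$ as an affine function of a Gaussian, and then read off (\ref{4.2})--(\ref{4.4}) by direct differentiation and the Markov property of $U_m(t)$. Your explicit justification that $\tilde{e}_m$ has constant sign on $(\underline{x},\overline{x})\subset(k/m,(k+1)/m)$, so the absolute value only contributes the time-independent shift $\ln|\tilde{e}_m(x)|$, is a point the paper leaves implicit, but otherwise the arguments coincide.
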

\begin{proof}
It follows from (\ref{3.2}), $a^2=\theta$, $b=c=0$ and $U_m(0)>0$ that
\begin{align}\label{4.5}
 {U}_m(t)=U_m(0)
\exp\{\tilde{b}_m t+\varepsilon q_m{W}_m(t)\}>0,
\end{align}
where $\tilde{b}_m=-\theta(m\pi)^2-(\varepsilon q_m)^2/2$. In view of the Cole--Hopf transformation $K(t,x)=2\theta\ln |U(t,x)|/\xi $ and (\ref{3.3}), $K(t,x)={2\theta}[\ln U_m(t)+\ln |\tilde{e}_m(x)| ]/\xi$. Thanks to (\ref{4.5}) and $W_m(t)$ is a standard scalar Brownian motion, the random variable ${K}(t,x)$ obeys the normal distribution $N\left({\tilde{\mu}}(t,x),\tilde{\nu}(t)\right)$
with ${\tilde{\mu}}(t,x)$ and $\tilde{\nu}(t)$ given by (\ref{4.1}), and the corresponding PDF $p(\kappa,t,x)$ satisfies (\ref{4.2}) and (\ref{4.3}). Due to $U(t,x)=U_m(t)\tilde{e}_m(x)$ is a Markov process and $K(t,x)=2\theta\ln |U(t,x)|/\xi $, the Chapman--Kolmogorov equation (\ref{4.4}) holds.
\end{proof}

By analogue  arguments as in Theorem \ref{th.2.7}, we obtain the following probabilistic representation  for $p(\kappa,t,x)$ of the solution process $K(t,x)$ of the KPZ equation $(\ref{1.3})$.
\begin{theorem}\label{th.4.2}
Under the assumption of Theorem \ref{th.4.1}. Suppose that $\hat{K}(s,x)$ is a solution of the following SDE
\begin{align}\label{4.6}
 d{\hat{K}( s,x)}=\frac{\partial{\tilde{\mu}}(T-s,x)}{\partial s}{ds}+\sqrt{-\frac{\partial\tilde{\nu}(T-s)}{\partial s}}dB(s),\quad T-t\leq s\leq T,
  \end{align}
with the initial condition $\hat{K}(T-t,x)=u$ and $x\in(\underline{x},\overline{x})\subset(k/m,(k+1)/m)$, $k=0,1,2,\ldots,m-1$. Then the density function $p$ of the KPZ equation  $(\ref{1.3})$ admits the probabilistic representation
\begin{align}\label{4.7}
p(\kappa, t,x)=\mathbb{E}\big[ p(\hat{K}(T,x),0,x)|{\hat{K}( T-{t},x)=\kappa}\big],\quad  (\kappa,t,x)\in\mathbb{R}\times[0,T]\times(\underline{x},\overline{x}).
\end{align}
\end{theorem}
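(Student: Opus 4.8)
The plan is to reprise the time-reversal argument of Theorem \ref{th.2.7} essentially verbatim, since Theorem \ref{th.4.1} already supplies the density (\ref{4.2}) as a Gaussian whose mean $\tilde{\mu}(t,x)$ and variance $\tilde{\nu}(t)$ solve the forward Fokker--Planck equation (\ref{4.3}). First I would introduce the time-reversed density $\tilde{p}(\kappa,\tilde{t},x):=p(\kappa,T-\tilde{t},x)$ for $\tilde{t}\in[0,T]$. Differentiating in $\tilde{t}$ and substituting (\ref{4.3}) shows that $\tilde{p}$ satisfies the backward equation
\[
\frac{\partial \tilde{p}}{\partial \tilde{t}}+\frac{1}{2}\tilde{\mathcal{G}}(\tilde{t},x)\frac{\partial^2 \tilde{p}}{\partial \kappa^2}+\tilde{\mathcal{M}}(\tilde{t},x)\frac{\partial \tilde{p}}{\partial \kappa}=0,
\]
with terminal value $\tilde{p}(\kappa,T,x)=p(\kappa,0,x)$, where $\tilde{\mathcal{M}}(\tilde{t},x)=-\frac{\partial \tilde{\mu}}{\partial t}(T-\tilde{t},x)$ and $\tilde{\mathcal{G}}(\tilde{t},x)=\frac{\partial \tilde{\nu}}{\partial t}(T-\tilde{t})$, in direct analogy with (\ref{2.15}).

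Next I would verify that the coefficients of the SDE (\ref{4.6}) coincide with those of this backward equation. By the chain rule, $\frac{\partial}{\partial s}\tilde{\mu}(T-s,x)=-\frac{\partial \tilde{\mu}}{\partial t}(T-s,x)=\tilde{\mathcal{M}}(s,x)$ and $-\frac{\partial}{\partial s}\tilde{\nu}(T-s)=\frac{\partial \tilde{\nu}}{\partial t}(T-s)=\tilde{\mathcal{G}}(s,x)$, so (\ref{4.6}) is exactly the diffusion $d\hat{K}(s,x)=\tilde{\mathcal{M}}(s,x)\,ds+\sqrt{\tilde{\mathcal{G}}(s,x)}\,dB(s)$. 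Here it is worth noting that $\tilde{\nu}(t)=\frac{4\theta^2}{\xi^2}[{Var}[\ln U_m(0)]+(\varepsilon q_m)^2 t]$ is affine in $t$ with positive slope, so $\tilde{\mathcal{G}}(s,x)=\frac{4\theta^2}{\xi^2}(\varepsilon q_m)^2>0$ for every $s$; hence $\sqrt{\tilde{\mathcal{G}}(s,x)}$ is well-defined with no extra positivity hypothesis, in contrast to Theorem \ref{th.2.7}. The same formula gives $\tilde{\nu}(T-\tilde{t})>0$, so the Gaussian $\tilde{p}$ is bounded in $\kappa$.

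The core of the argument is then an It\^{o}--Dynkin computation. Applying It\^{o}'s formula to $\bar{Y}(s):=\tilde{p}(\hat{K}(s,x),s,x)$ and inserting the backward equation annihilates the $ds$-term, leaving $\mathrm{d}\bar{Y}(s)=\frac{\partial \tilde{p}}{\partial \kappa}(\hat{K}(s,x),s,x)\sqrt{\tilde{\mathcal{G}}(s,x)}\,\mathrm{d}B(s)$, a local martingale. Introducing $\tau_R=\inf\{s>0:|\hat{K}(s,x)|>R\}$ and taking conditional expectations via Dynkin's formula gives $\mathbb{E}[\bar{Y}(T\wedge\tau_R)\mid \hat{K}(\tilde{t},x)=\kappa]=\tilde{p}(\kappa,\tilde{t},x)$. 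Using the boundedness of $\tilde{p}$ together with dominated convergence, I would let $R\to\infty$ to obtain $\tilde{p}(\kappa,\tilde{t},x)=\mathbb{E}[\tilde{p}(\hat{K}(T,x),T,x)\mid \hat{K}(\tilde{t},x)=\kappa]$. Finally, translating back through $\tilde{p}(\kappa,\tilde{t},x)=p(\kappa,T-\tilde{t},x)$ and setting $\tilde{t}=T-t$ yields (\ref{4.7}).

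The only genuine subtlety, and hence the step I would watch most carefully, is matching the signs in the drift and diffusion between the backward equation and the SDE (\ref{4.6}); once that chain-rule bookkeeping is done and $\tilde{\mathcal{G}}>0$ is confirmed, the remainder is a routine reprise of Theorem \ref{th.2.7}. Because $\tilde{\mu}$ is affine in $t$ and $\tilde{\nu}$ has constant slope here, these verifications collapse to one-line computations, making this a strictly easier instance than Theorem \ref{th.2.7}.
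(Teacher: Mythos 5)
Your proposal is correct and is exactly what the paper does: the paper gives no separate proof of Theorem \ref{th.4.2}, stating only that it follows ``by analogue arguments as in Theorem \ref{th.2.7},'' and your time-reversal/It\^{o}--Dynkin argument is a faithful and complete instantiation of that. Your added observation that $\tilde{\nu}$ is affine with slope $\frac{4\theta^2}{\xi^2}(\varepsilon q_m)^2>0$, so no extra positivity hypothesis on $\tilde{\mathcal{G}}$ is needed here, is a correct and worthwhile refinement.
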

\begin{remark}
\rm
For the following stochastic Burgers equation with an additive $Q$-Wiener noise:
\begin{align}\label{4.8}
\begin{array}{clc}
\frac{\partial}{\partial t}{H}(t,x)=\theta\frac{\partial^2}{\partial x^2}H(t,x)+\frac{\xi}{2}|\frac{\partial}{\partial x}H(t,x)|^2+\frac{2\theta\varepsilon}{\xi} \frac{\partial}{\partial t}{W}(t,x),
\end{array}
\end{align}
where $H(t,x)=\frac{\partial}{\partial x}K(t,x)$. Here $K(t,x)$ and $W(t,x)$ are given in (\ref{1.3}). We find that the PDF satisfied by the process corresponding to the integral of the solution to the above stochastic Burgers equation is the one obtained in the Theorems \ref{th.4.1} and \ref{th.4.2} since $K(t,x)=\int_a^xH(t,y)dy+K(t,a)$, $\underline{x}<a<x<\overline{x}$.
\end{remark}
\begin{figure}
\centering%
\includegraphics[height=2.5in,width=5in]{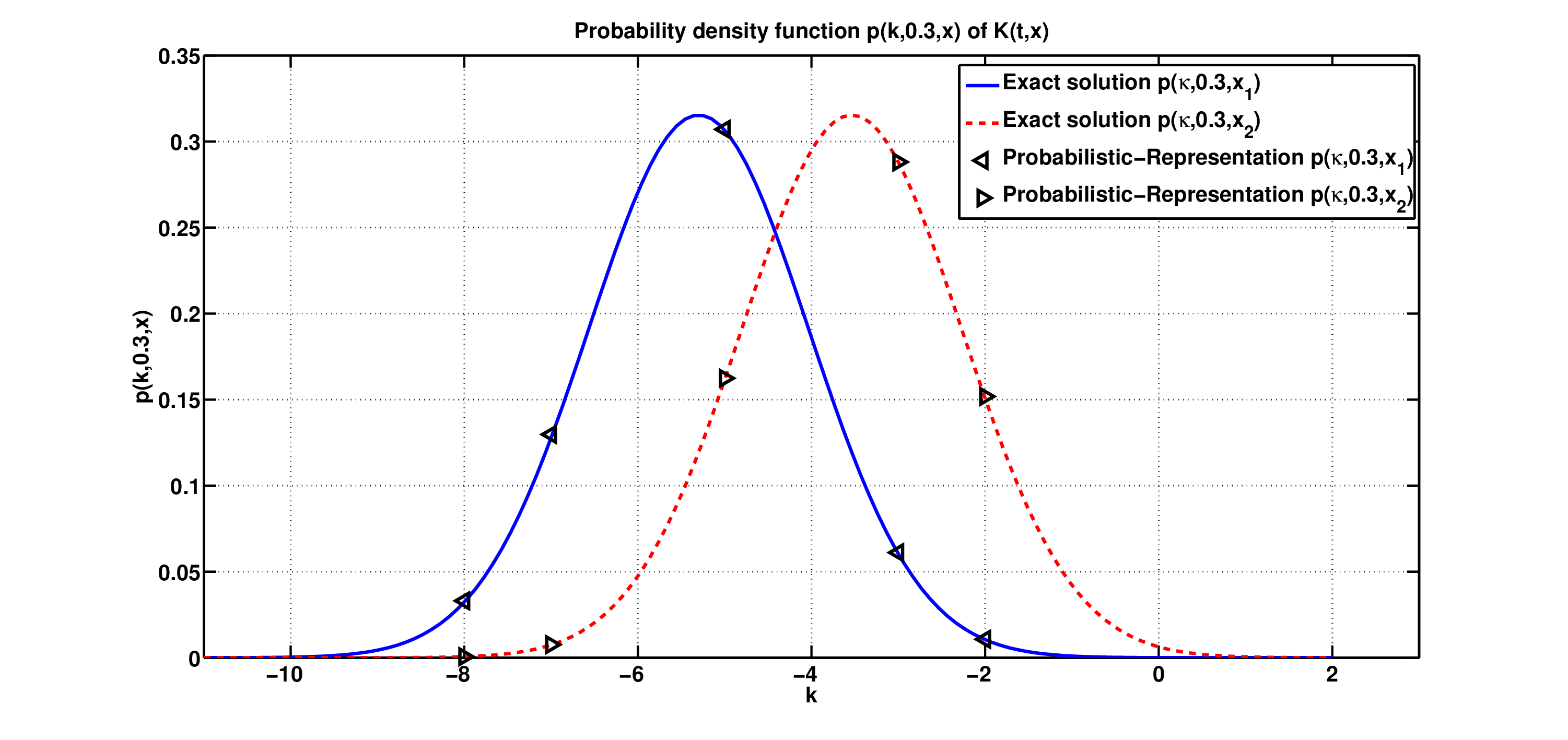}

\caption{Simulation of the PDF $p(\kappa, 0.3,x)$ of the KPZ equation (\ref{1.3}) with the step size $\triangle t=10^{-2}$, $T=0.5$, and different $\kappa$ and $x$.}
\label{4}
\end{figure}
\begin{example}
\rm
We adopt a set of parameters as follows $$\theta=1,\xi=1,\varepsilon={\sqrt{2}}/2,m=1,q_m=1, x_1=1/8~\mathrm{and}~x_2=5/8.$$
Let $\ln U_m(0)$ obey the normal distribution $N\left(1,{1}/4\right)$. Thanks to $\tilde{e}_m(x)=\sqrt{2}\sin(\pi x)$ and (\ref{4.1}), the density function $p(\kappa,t,x_i)$ of the KPZ equation $(\ref{1.3})$ is
\begin{align}\label{4.9}
p(\kappa,t,x_i)=\frac{1}{\sqrt{2\pi\tilde{\nu}(t)}}\exp\left\{-\frac{(\kappa-{\tilde{\mu}}
(t,x_i))^2}{2\tilde{\nu}(t)}\right\}, \quad t\geq0,
\end{align}
where ${\tilde{\mu}}(t,x_i)=2[1-(\pi^2+1/4)t+\ln |\sqrt{2}\sin(\pi x_i)|]$ and $\tilde{\nu}(t)=1+2t$, $i=1,2$.

In view of (\ref{4.7}), $p(\kappa, t,x_i)$ also admits the following probabilistic representation
\begin{align}\label{4.10}
p(\kappa, t,x_i)=\mathbb{E}\big[p(\hat{K}(T,x_i), 0,x_i)|{\hat{K}( T-t,x_i)=\kappa}\big],\quad (\kappa,t,x_i)\in\mathbb{R}\times[0,T]\times(\underline{x},\overline{x}),
\end{align}
where $p(\kappa, 0,x_i)$ is given in (\ref{4.9}) and $\hat{K}\left( t,x_i\right)$ is an It\^{o} process driven by
\begin{align*}
 \mathrm{d}{\hat{K}( t,x_i)}=2(\pi^2+1/4){\mathrm{d}t}+\sqrt{2}\mathrm{d}B(t),
  \end{align*}
with the initial value $\hat{K}(T-t,x_i)=\kappa$ and $i=1,2$.

By choosing $10^4$ samples of $\hat{K}(t,x)$, the computer simulations in Figure \ref{4} plot the PDFs $p(\kappa, 0.3,x)$ of (\ref{1.3}) with different $x$, and diverse expressions (\ref{4.9}) and (\ref{4.10}). It shows that the evolution of the PDF for the KPZ equation (\ref{1.3}) is related to the spatial position $x$.
\end{example}

\section{Conclusion}\label{sec5}

In this study, we focus on the derivation of the Fokker--Planck equation corresponding to a stochastic heat equation that incorporates additive $Q$-Wiener noise and is subject to non-homogeneous boundary conditions. A prominent technical challenge stems from the infinite-dimensional nature of (\ref{1.1}), which renders the common method in stochastic differential equations-deriving the adjoint operator of the generator through integration by parts-inapplicable. Furthermore, we extend our analysis to scenarios involving multiplicative noise with homogeneous boundary conditions and non-local diffusion operators. By employing sinusoidal initial excitation, we derive the Fokker--Planck equation corresponding to (\ref{1.2}), overcoming the difficulties introduced by multiplicative noise. Notably, we find that the evolution of the probability density function depends on the spatial locations $x$, and as $x$ approaches some specific discrete values $k/m$, the distribution transitions from the log-normal to the Dirac delta distribution. Finally, we provide the explicit probability density function and Fokker--Planck equation for the KPZ equations (\ref{1.3}) associated with equation (\ref{1.2}).

\bmhead{Acknowledgements}

This work was partly supported by the NSFC grant 12141107, the Guangdong Provincial Key Laboratory of Mathematical and Neural Dynamical Systems (Grant 2024B1212010004),   the Cross Disciplinary Research Team on Data Science and Intelligent Medicine (2023KCXTD054), the Guangdong-Dongguan Joint Research Grant 2023A1515140016, 2024A1515110193, and the China Postdoctoral Science Foundation under Grant 2025M773083.

\bmhead{Data availability} Data will be made available upon request.

\section*{Declarations}

\bmhead{Conflict of interest} The authors declare that there is no conflict of interest.
\small

\end{document}